\definecolor{red}{rgb}{0.8,0,0}
\definecolor{darkorange}{rgb}{1,0.4,0}
\definecolor{lightorange}{rgb}{1,0.6, 0}
\definecolor{yellow}{rgb}{1,0.8, 0}
\newtheorem{theorem}{Theorem}
\newtheorem{remark}{Remark}
\newtheorem{lemma}{Lemma}
\newcommand{\0}{\mathaccent23}
\newcommand\tr{\operatorname{tr}}
\newcommand\inc{\operatorname{inc}}
\newcommand\skw{\operatorname{skw}}
\newcommand\sskw{\operatorname{sskw}}
\newcommand\vskw{\operatorname{vskw}}
\newcommand\mskw{\operatorname{mskw}}
\newcommand\sym{\operatorname{sym}}
\newcommand\grad{\operatorname{grad}}
\newcommand\deff{\operatorname{def}}
\renewcommand\div{\operatorname{div}}
\renewcommand\ker{\mathcal{N}}
\newcommand\curl{\operatorname{curl}}
\newcommand\rot{\operatorname{rot}}
\newcommand\dev{\mathrm{dev}}
\newcommand\hess{\operatorname{hess}}
\newcommand\ran{\mathcal{R}}
\newcommand\K{\mathbb{K}}
\newcommand\M{\mathbb{M}}
\renewcommand\S{{\mathbb S}}
\newcommand\R{\mathbb{R}}
\newcommand\x{\times}
\newcommand\V{{\mathbb{V}}}
\newcommand{\Symm}{\odot}
\newcommand{\bs}{{\scriptscriptstyle \bullet}}
\let\Cal\mathcal
\let\Bbb\mathbb
\let\frak\mathfrak
\begin{document}
\title{BGG sequences with weak regularity and applications}
\author{Andreas \v{C}ap \and  Kaibo Hu}
\address{Faculty of Mathematics, University of Vienna, Oskar-Morgenstern-Platz 1, 1090 Wien, Austria}
\email{Andreas.Cap@univie.ac.at}
 \address{Mathematical Institute,
University of Oxford, Andrew Wiles Building, Radcliffe Observatory Quarter, 
Oxford, OX2 6GG, UK}
\email{Kaibo.Hu@maths.ox.ac.uk}
\date{\today.}

\maketitle

\begin{abstract}
We investigate some Bernstein-Gelfand-Gelfand (BGG) complexes consisting of Sobolev spaces on bounded Lipschitz domains in $\mathbb{R}^{n}$. In particular, we compute the cohomology of the conformal deformation complex and the conformal Hessian complex in the Sobolev setting. The machinery does not require algebraic injectivity/surjectivity conditions between the input spaces, and allows multiple input complexes. As applications, we establish a conformal Korn inequality in two space dimensions with the Cauchy-Riemann operator and an additional third order  operator with a background in M\"obius geometry. We show that the linear Cosserat elasticity model is a Hodge-Laplacian problem of  a twisted de Rham complex. From this cohomological perspective, we propose potential generalizations of continuum models with microstructures.
\end{abstract}
 
\vspace{+0.5cm}

In this article, we study BGG complexes with a view towards applications in numerical analysis. The origin of these are the Bernstein-Gelfand-Gelfand (BGG) resolutions from (infinite dimensional) representation theory. These admit a dual interpretation in terms of invariant differential operators on generalized flag manifolds. A direct construction of BGG resolutions in the language of differential operators was given in  \cite{vcap2001bernstein}. Representation theory still is an important ingredient to this construction, but only finite dimensional representations are needed there. The construction actually applies not only to generalized flag manifolds but also to manifolds endowed with curved geometries modeled on these homogeneous spaces, so-called parabolic geometries, see \cite{parabolbook}. In this more general setting, the construction does not provide complexes, but it is important as a conceptual construction of differential operators intrinsic to parabolic geometries, whose existence was not established in general before.

For the purpose of the current article, the strong invariance properties of the construction are not important, we only work in the setting of appropriate bounded domains in $\mathbb R^n$. This leads to substantial simplifications compared to the original constructions which (for the examples we consider) are related either to projective differential geometry or to conformal differential geometry. What is important for us is that one obtains an explicit construction of the relevant complexes from the de Rham complex of differential forms with values in an appropriate finite dimensional vector space. Together with known results on the de Rham complex, this allows us to obtain analytical results which are important for the use of BGG complexes in numerical analysis and applied mathematics. Still, the geometric background is helpful, for example in understanding the action of Euclidean motions, compare with the example discussed in Section \ref{sec:conf-Hess}.

The applications of the BGG construction in numerical analysis were motivated by the construction of finite elements for linear elasticity (the Hellinger-Reissner principle) \cite{Arnold2006a,arnold2002mixed}. For elasticity problems, the stress tensor and load fit in the so-called elasticity complex (Riemannian deformation complex), which is a special case of a BGG complex  \cite{Arnold2006a,eastwood2000complex,eastwood1999variations}. For numerical stability (inf-sup conditions \cite{babuvska1973finite,brezzi1974existence}) and approximation properties, it is desirable that  the finite element spaces for the stress and load fit into a discrete elasticity complex. This perspective inspired the construction of the Arnold-Winther element \cite{arnold2002mixed}, which solved a decades-long problem. 

Mathematical and numerical analysis of PDE models often involves broader classes of function spaces, e.g., Sobolev spaces. Analytic issues and applications of the complexes, particularly the elasticity complex, were discussed by several authors \cite{amstutz2019incompatibility,angoshtari2015differential,ciarlet2007characterization,geymonat2009hodge,pauly2020elasticity,pauly2020divdiv,yavari2020applications}. Inspired by the BGG construction, Arnold and Hu~\cite{arnold2021complexes} systematically derived a comprehensive list of complexes and established their algebraic and analytic properties for a large class of function spaces.  
Recently, there has been a surge of interests in discretizing these BGG complexes \cite{arf2021structure,chen2020discrete,chen2020finite,chen2021finite,chen2021finite2,chen2021geometric,christiansen2020discrete,christiansen2019finite,hu2021conforming,hu2021conforming2,sander2021conforming}.   Not only these BGG complexes, but also the ``BGG diagrams'' for deriving these complexes are important. To discretize elasticity with standard ``finite element differential forms'' \cite{Arnold.D;Falk.R;Winther.R.2006a,Arnold.D;Falk.R;Winther.R.2010a}, Arnold, Falk and Winther \cite{arnold2007mixed} proposed a scheme which imposes the symmetry of tensors weakly using Lagrange multipliers. Diagram chasing on the BGG diagrams plays an important role in this work. Babu\v{s}ka and Brezzi theories require that the discrete spaces in the BGG diagram should not be arbitrary but rather satisfy some algebraic conditions that are similar to the continuous level \cite{arnold2021complexes}. If finite element spaces are perfectly matched in the BGG diagrams, the schemes with weakly imposed symmetry imply strong symmetry \cite{gopalakrishnan2012second}. Diagram chasing on the BGG diagram also leads to null-homotopy operators for the elasticity complex \cite{christiansen2020poincare} and provides a constructive tool for deriving finite elements and complexes \cite{christiansen2020discrete,christiansen2018nodal,christiansen2019finite}.

However, the simplification in the geometric and algebraic setting in \cite{arnold2021complexes} left several questions open. Firstly, examples in \cite{arnold2021complexes} include the conformal deformation complex and the conformal Hessian complex which have important applications in general relativity and continuum mechanics  (c.f., Beig and Chrusciel  \cite{beig2020linearised} on  parameterizing the Einstein constraint equations using differential complexes, where the ``conformal complex'' and the ``scalar complex'' correspond to the  conformal deformation complex  and the  conformal Hessian complex, respectively). The cohomology of these complexes was left open in \cite{arnold2021complexes}. Secondly, \cite{arnold2021complexes} shows that analytic results, e.g., various Poincar\'e or Korn inequalities, are deeply rooted in the algebraic structures.  However, the conformal Korn inequality in two space dimensions remains open as the diagram does not fulfill the assumptions in \cite{arnold2021complexes} (c.f., \cite{dain2006generalized} for applications of generalized Korn inequalities in general relativity).  
Last but not least, the algebraic conditions (injectivity/surjectivity of the connecting maps) in \cite{arnold2021complexes} raise challenges for constructing numerical methods. Only carefully designed discrete spaces can fulfill these conditions. 

In this paper, we provide a major generalization  {of} \cite{arnold2021complexes} to solve the above issues. We get to a construction that is closer to the developments in \cite{vcap2001bernstein} but takes into account the simplifications present in our situation. In particular, the generalizations of the framework include:
\begin{itemize}
\item allowing more than two rows in the diagram,
\item removing the injectivity/surjectivity conditions in the assumptions. The resulting complexes will have a more complicated form, which can be simplified to the cases {treated} in \cite{arnold2021complexes} when algebraic conditions hold. 
\end{itemize}
As a result of this generalization, we compute the cohomology of important complexes, e.g., the conformal deformation complex, the conformal Hessian complex and higher order generalizations of the Hessian complex. We establish a conformal Korn inequality in two space dimensions and generalizations of the linear Cosserat elasticity model based on its connections to the twisted de Rham complexes. The above generalizations of the algebraic framework play a critical role in these developments.

\medskip
 
\paragraph{\it Conformal Korn inequalities.}
A generalized version of the Korn inequality holds in $n$D for $n\geq 3$. In the framework of \cite{arnold2021complexes}, this inequality holds since  the conformal deformation complex has finite dimensional cohomology (thus the operators have closed range).  Nevertheless, this result does not hold in 2D \cite{dain2006generalized}. This is consistent with the fact that a 2D version of the conformal deformation BGG diagram does not satisfy the injectivity/surjectivity conditions, thus not fitting in the framework of  \cite{arnold2021complexes}.  In fact, in 2D the $\dev\sym\grad$ operator involved in the generalized Korn inequalities corresponds to the Cauchy-Riemann operator in complex analysis. Based on the generalizations in this paper, we fix the 2D conformal Korn inequality by adding a third order term, which has a geometric interpretation via M\"obius structures.    

\paragraph{\it Linear Cosserat continuum.}
In 1909, the Cosserat brothers introduced a new continuum model that incorporated a microscopic rotational degree of freedom at each point of the material \cite{cosserat1909theorie}.  
This seminal work triggered a number of generalizations in continuum mechanics, e.g., Eringen's micropolar elasticity theory \cite{eringen1999theory}.  In this paper, we observe that the linear Cosserat elasticity exactly corresponds to the Hodge-Laplacian problem of the twisted de Rham complex that leads to the elasticity complex. A similar observation holds for plate models. The Kirchhoff and a modified version of the Reissner-Mindlin plate models are the Hodge-Laplacian problems of the Hessian complex and the corresponding twisted de Rham complex \cite{arnold2021complexes}, respectively. This connection inspired the construction of numerical methods for solving the biharmonic equation \cite{feec-lecture,quenneville2015new}, although not formulated in terms of the twisted de Rham complex. 
From this perspective, the cohomology-preserving projection in \cite{arnold2021complexes} and this paper can be understood as the process of eliminating the rotational degrees of freedom from the Cosserat model to get the classical elasticity. The operators mapping the BGG complex back to the twisted de Rham complex can in turn be interpreted as describing classical elasticity as the Cosserat model with the additional rotational degree of freedom appropriately fixed. A similar conclusion holds for the Kirchhoff and the modified Reissner-Mindlin plate models.  { Moreover, the Reissner-Mindlin plate and the Kirchhoff plate can be obtained as a dimension reduction of the Cosserat elasticity and the standard elasticity, respectively. A rigorous justification of the reduction in terms of $\Gamma$-limits can be found in \cite{ciarlet1997mathematical,neff2010reissner}.} These connections are sketched in the following diagram:
\begin{equation}\label{diagram:models}
\begin{tikzcd}[row sep=large, column sep = huge]
{\rm Cosserat~elasticity} \arrow{r}{\mathrm{BGG~(elasticity)}}\arrow[d, "{\rm  dimension~reduction}"]& {\rm classical~elasticity}\arrow[d, "{\rm  dimension~reduction}"] \\
{\rm (modified) ~Reissner-Mindlin~plate}\arrow{r}{\mathrm{BGG~(hessian)}}&{\rm Kirchhoff~plate}
\end{tikzcd}
\end{equation}

This cohomological perspective of the linear Cosserat model has several consequences, e.g., well-posed Hodge-Laplacian boundary value problems and important tools for the analysis and computation of the Cosserat model. Moreover, each of the BGG complexes leads to a Hodge-Laplacian problem (or, equivalently, energy functional).  Other complexes and diagrams may thus provide generalizations of the Cosserat model to other types of microstructures. In this paper, we show some examples in this direction inspired by the Erlangen program, where different Lie symmetries correspond to different geometries \cite{klein1893comparative}.

The rest of this paper will be organized as follows. In Section 1, we establish the abstract framework. In Section 2, we show examples that fit in the general framework. In Section 3, we discuss the algebraic and geometric background of our construction and its relation to BGG sequences for parabolic geometries. In Section 4, we focus on the conformal Korn inequalities. In Section 5, we show connections between the Cosserat model and the twisted de Rham complex and discuss potential generalizations. 

 {  Some spaces and operators  used in this paper are summarized as follows.
\begin{table}[h!]
\begin{center}
\begin{tabular}{c|c}
$Z^{i, j}$ & \eqref{diag:multi-rows} \\
$S^{i, j}: Z^{i, j}\to Z^{i+1, j-1}$ & \eqref{diag:multi-rows}\\
$d^{i, j}: Z^{i, j}\to Z^{i+1, j}$ & \eqref{diag:multi-rows}\\
 $K^{i, j}: Z^{i, j}\to Z^{i, j-1}$ & \eqref{diag:multi-rows}\\
 $T^{i, j}: Z^{i, j}\to Z^{i-1, j+1}$ & \eqref{def:T}\\
 $Z^{i}$, $d^{i}$, $S^{i}$, $T^{i}$ &vectorized versions, e.g.\, \eqref{def:Zi}\\

 $d_{V}^{i}=d^{i}-S^{i}: Z^{i}\to Z^{i+1}$ &  \eqref{dV}\\
 $\Upsilon^{i}=\ker(S^{i})\cap \ran(S^{i-1})^{\perp}\subset Z^{i}$ &\eqref{def:Upsilon}\\
 $D^{i}: \Upsilon^{i}\to \Upsilon^{i+1}$ & \eqref{def:D}\\
 \begin{tabular}{@{}c@{}}$P_{\ker(S^{i, j})}: Z^{i, j}\to \ker (S^{i, j})\subset Z^{i, j}$, \\ $P_{\ran(S^{i, j})^{\perp}}: Z^{i+1, j-1}\to \ran(S^{i, j})^{\perp}\subset Z^{i+1, j-1}$\end{tabular}  & p. \pageref{def:T}\\
 $F^{i}: Z^{i}\to Z^{i}$ & \eqref{matrix-F}\\
 $G^{i}: Z^{i}\to Z^{i-1}$ & \eqref{def:H}\\
  $A^{i}: \Upsilon^{i}\to Z^{i}$, $B^{i}: Z^{i}\to \Upsilon^{i}$ & \eqref{def:A}, \eqref{def:B}
\end{tabular}
\end{center}
\end{table}}
Throughout the paper, we use $H^{q}$ to denote the Sobolev space consisting of $L^{2}$ functions  with all derivatives up to order $q$ in $L^{2}$, and  $\|\cdot\|_{q}$ for the corresponding Sobolev norm of $H^{q}$. We use $\|\cdot\|$ to denote  the $L^{2}$ norm.

\section{Abstract framework}\label{sec:framework}

\subsection{The basic setup}

Let $Z^{i, j}, ~0\leq i\leq n, 0\leq j\leq N$, be Hilbert spaces and $d^{i, j}: Z^{i, j}\to Z^{i+1, j}$  bounded linear operators such that for each $j$ we obtain a compex $(Z^{\bs,j},d^{\bs,j})$, i.e.~$d^{i+1,j}\circ d^{i,j}=0$ for all $i,j$. Let $S^{i, j}: Z^{i, j}\to Z^{i+1, j-1}$ and $K^{i,j}:Z^{i,j}\to Z^{i, j-1}$ be linear operators  with properties specified below. We collect these data into the following diagram called the {\it BGG diagram}:
 \begin{equation}\label{diag:multi-rows}
\begin{tikzcd}
0 \arrow{r} &Z^{0, 0}  \arrow{r}{d^{0, 0}} &Z^{1, 0} \arrow{r}{d^{1, 0}} &\cdots \arrow{r}{d^{n-1, 0}} & Z^{n, 0} \arrow{r}{} & 0\\
0 \arrow{r}&Z^{0, 1}\arrow{r}{d^{0, 1}} \arrow[u, "{K^{0,1}}"]\arrow[ur, "S^{0, 1}"]&Z^{1, 1}  \arrow{r}{d^{1, 1}} \arrow[ur, "S^{1, 1}"]\arrow[u, "{K^{1,1}}"]&\cdots \arrow{r}{d^{n-1, 1}}\arrow[ur, "S^{n-1, 1}"] \arrow[u, " "]&\arrow[u, "{K^{n,1}} "] Z^{n, 1}\arrow{r}{} & 0\\
 & \cdots \arrow[u, " {K^{0,2}}"]\arrow[ur, " S^{0, 2}"]&  \cdots\arrow[ur, "S^{1, 2} "]\arrow[u, "{K^{1,2}} "]&\cdots \arrow[ur, "S^{n-1, 2} "] \arrow[u, " "]&\cdots\arrow[u, "{K^{n,2}} "]  & \\
 0 \arrow{r}&Z^{0, N}\arrow{r}{d^{0, N}} \arrow[u, "{K^{0,N}} "]\arrow[ur, "S^{0, N}"]&Z^{1, N}  \arrow{r}{d^{1, N}} \arrow[ur, "S^{1, N}"]\arrow[u, " {K^{1,N}}"]&\cdots \arrow{r}{d^{n-1, N}}\arrow[ur, "S^{n-1, N}"] \arrow[u, " "]&\arrow[u, "{K^{N,N}} "] Z^{n, N}\arrow{r}{} & 0.
 \end{tikzcd}
\end{equation}
This is a generalization of the setup in \cite{arnold2021complexes} where only two rows were used.

 The abstract results below are established in two steps that impose slightly different assumptions. However, we collect the basic relations between the operators here. We start with two basic assumptions: 
\begin{equation}\label{SKKS}
S^{i, j-1}K^{i, j}=K^{i+1, j-1}S^{i, j},
\end{equation}
\begin{equation}\label{DKKD}
S^{i, j}=d^{i, j-1}K^{i, j}-K^{i+1, j}d^{i, j}.
\end{equation}
Using the fact that the $d^{i,j}$ form a complex for each $j$, \eqref{DKKD} readily implies that 
\begin{equation}\label{DSSD}
S^{i+1, j}d^{i, j}=-d^{i+1, j-1}S^{i, j}, \quad \forall i, j \geq 0.
\end{equation}
Using \eqref{DKKD}, \eqref{DSSD} and \eqref{SKKS}, we get $S^{i+1,j-1}\circ S^{i,j}=d^{i+1,j-2}S^{i,j-1}K^{i,j}+K^{i+1,j-1}S^{i+1,j}d^{i,j}$. Inserting for both $S$ operators {in the right hand side} from \eqref{DKKD} we conclude that  
\begin{equation}\label{SS}
S^{i+1, j-1}\circ S^{i, j}=0, \quad \forall i, j \geq 0. 
\end{equation}

In applications, each row of \eqref{diag:multi-rows} will be an input complex which is well understood, most often a de Rham complex with values in a finite-dimensional vector space. The $S$ operators that connect the basic input complexes are of algebraic origin. However, we will use function spaces of different regularity, which leads to some subtleties {in the construction of examples coming from de Rham complexes. In the two steps of our construction, we have to impose different regularity assumptions as discussed in Section \ref{sec:struct}. Moreover, we have to prove that the change of regularity assumptions does not influence the cohomology, see Lemma \ref{lem:change-reg}.}

Next, we denote by $Z^i$ the $i$-th column of \eqref{diag:multi-rows}, i.e.\ 
\begin{equation}\label{def:Zi}
Z^i:=Z^{i,0}\oplus Z^{i,1}\oplus\dots\oplus Z^{i,N}.
\end{equation}
 Each element in $Z^{i}$ can be viewed as a vector with the $j$-th component in $Z^{i, j}$.  Then any linear operator on $Z^{i}$ can be written in a matrix form. For example, the operators $K^{i,j}$ can be collected into $K^{i}: Z^{i}\to Z^{i}$, which has the matrix form 
$$
K^{i}:=\left (
\begin{array}{cccccc}
0 & K^{i, 1} & 0 & 0 & \cdots&0\\
0& 0 & K^{i, 2}  & 0 & \cdots&0\\
&&\cdots &\cdots&&\\
0& 0 &  0&0&\cdots&0  \\
\end{array}
\right ).
$$
Similarly, the operators $d^{i,j}$ are collected into $d^{i}: Z^{i}\to Z^{i+1}$ given by 
$$
d^{i}:=\left (
\begin{array}{cccccc}
d^{i, 0} & 0 & 0 & 0 & \cdots&0\\
0&  d^{i, 1}  &0 & 0 & \cdots&0\\
&&\cdots &\cdots&&\\
0& 0 &  0&0&\cdots&d^{i, N}  \\
\end{array}
\right ),
$$
and the $S^{i,j}$ are collected into $S^i:Z^i\to Z^{i+1}$ given by 
$$
S^{i}:=\left (
\begin{array}{cccccc}
0 & S^{{i}, 1} & 0 & 0 & \cdots&0\\
0& 0 & S^{{i}, 2}  & 0 & \cdots&0\\
&&\cdots &\cdots&&\\
0& 0 &  0&0&\cdots&0  
\end{array}
\right ).
$$
Observe that the identities derived above have simple expressions in these terms, {i.e.\ $d^{i+1}\circ d^i=0$, $S^iK^i=K^{i+1}S^i$, $S^i=d^iK^i-K^{i+1}d^i$, $S^id^i=-d^{i+1}S^i$, and $S^{i+1}\circ S^i=0$.}  In particular, $(Z^\bs,d^\bs)$ is a complex, that can simply be viewed as the direct sum of the rows of \eqref{diag:multi-rows}. In particular, the cohomology of this complex is just the direct sum of the cohomologies of the individual rows $(Z^{\bs,j},d^{\bs,j})$. 

\subsection{Twisted complex}\label{sec:twisted}
In the first step, we show that the differential of the complex $(Z^\bs,d^\bs)$ can be modified without changing the cohomology. For this step, we assume \eqref{SKKS} and \eqref{DKKD} as well as that all the operators $K^{i,j}$ and $S^{i,j}$ are bounded. This of course implies that also $K^i:Z^i\to Z^i$ and $S^i:Z^i\to Z^{i+1}$ are bounded.

Now define the twisted operator $d_{V}^{i}:=d^i-S^i:Z^{i}\to Z^{i+1}$, i.e.\ 
\begin{equation}\label{dV}
d_{V}^{i}:=\left (
\begin{array}{cccccc}
d^{i, 0} & -S^{i, 1} & 0 & 0 & \cdots&0\\
0& d^{i, 1} & -S^{i, 2}  & 0 & \cdots&0\\
&&\cdots &\cdots&&\\
0& 0 &  0&0&\cdots&d^{i, N}  \\
\end{array}
\right ).
\end{equation}
From \eqref{DSSD} and \eqref{SS}, we obtain $d_{V}^{i+1}\circ d_{V}^{i}=0$ for each $i$. The complex $(Z^{\bs}, d_{V}^{\bs})$, i.e.,
\begin{equation}\label{twisted-complex}
\begin{tikzcd}
\cdots \arrow{r}{}& Z^{i-1} \arrow{r}{d_{V}^{i-1}} & Z^{i} \arrow{r}{d_{V}^{i}} & Z^{i+1} \arrow{r}{}& \cdots,  
\end{tikzcd}
\end{equation}
is referred to as the {\it twisted complex}, or the $d_{V}$-complex.

We can now prove that the twisted complex $(Z^{\bs}, d_{V}^{\bs})$ is isomorphic $(Z^{\bs}, d^{\bs})$, so in particular, their cohomologies are isomorphic, too. This means that we construct isomorphisms $F^{i}: Z^{i}\to Z^{i}, ~i=0, 1, \cdots, n$ such that the following diagram commutes:
\begin{equation}\label{FdvF}
\begin{tikzcd}
Z^{i} \arrow{r}{d^{i}}\arrow[d, "F^{i} "] &Z^{i+1}\arrow[d, "F^{i+1} "]  \\
Z^{i} \arrow{r}{d_{V}^{i}}&Z^{i+1}
\end{tikzcd}
\end{equation}
In fact, define $F^{i}:=\exp(K^{i})=I+K^i+\frac{1}{2}(K^i)^2+\frac{1}{6}(K^i)^{3}+\cdots$, the matrix exponential of $K^{i}$, i.e., 
\begin{equation}\label{matrix-F}
F^{i}:=\left (
\begin{array}{cccccc}
I &K^{i, 1} & \frac{1}{2}K^{i, 1}K^{i, 2} & \frac{1}{6}K^{i, 1}K^{i, 2}K^{i, 3} & \cdots&\frac{1}{N!}K^{i, 1}K^{i, 2}\cdots K^{i, N}\\
0& I & K^{i, 2}  & \frac{1}{2}K^{i, 2}K^{i, 3} & \cdots&\frac{1}{(N-1)!}K^{i, 2}K^{i, 3}\cdots K^{i, N}\\
&&\cdots &\cdots&&\\
0& 0 &  0&0&\cdots&I  
\end{array}
\right ).
\end{equation}
Observe that $(K^i)^{N+1}=0$ by construction, so the exponential series actually is a finite sum. Moreover each $F^i$ is obviously invertible, with inverse given as $\exp(-K^i)$. Now we are ready to formulate our first main result.

\begin{theorem}\label{thm:input-twisted}
  Assume \eqref{SKKS} and \eqref{DKKD} and that all the operators $K^{i,j}$ and $S^{i,j}$ are bounded. Then the operators $F^i$ defined above are bounded and induce an isomorphism between the twisted complex \eqref{twisted-complex} and the complex $(Z^\bs,d^\bs)$. Hence  for the cohomology of \eqref{twisted-complex}, we get $\mathcal{H}^k(Z^\bs,d_V^\bs)\cong\oplus_{j=0}^N \mathcal{H}^k(Z^{\bs,j},{d^{\bs, j}})$. Here for a complex $(X^{\bs}, d^{\bs})$, we use $\mathcal{H}^{k}(X^{\bs}, d^{\bs})$ to denote the cohomology at index $k$.
\end{theorem}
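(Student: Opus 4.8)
The plan is to verify the commutativity of the diagram \eqref{FdvF} directly, since everything else in the statement is essentially immediate once that is established. Boundedness of each $F^i$ follows at once: $F^i=\exp(K^i)$ is a finite sum (because $(K^i)^{N+1}=0$, the matrix being strictly upper triangular in the block sense), each summand is a composition of the bounded operators $K^{i,j}$, and a finite sum of bounded operators is bounded. Invertibility of $F^i$ with inverse $\exp(-K^i)$ is likewise formal, coming from the fact that $K^i$ commutes with $-K^i$ and the usual power-series manipulation, which is legitimate since both series terminate. So the only real content is $d_V^i\circ F^i=F^{i+1}\circ d^i$.

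\textbf{First I would reduce the identity to the level of the row operators.} Recall the matrix identity $S^i=d^i\circ K^i-K^{i+1}\circ d^i$, i.e.\ $d_V^i=d^i-S^i=d^i-d^i K^i+K^{i+1}d^i$. The claim $d_V^iF^i=F^{i+1}d^i$ is a polynomial identity in the operators $d^i,d^{i+1},K^i,K^{i+1},K^{i+2},\dots$, and the key structural facts I would use are: (i) $d^{i+1}d^i=0$; (ii) the ``twisted'' Leibniz relation $d^iK^i-K^{i+1}d^i=S^i$ together with its consequence $d^{i+1}S^i=-S^{i+1}d^i$ from \eqref{DSSD}; and (iii) $S^iK^i=K^{i+1}S^i$ from \eqref{SKKS} in its block form $S^iK^i=K^{i+1}S^i$ (note the shift structure makes this the correct matrix version of \eqref{SKKS}). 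The cleanest route, I expect, is to introduce on $Z^\bs$ the (degree-preserving, nilpotent) operator given by $K^i$ in degree $i$ and to think of $d^\bs$ as a degree-$+1$ differential; then a standard ``homotopy/conjugation'' computation shows $\exp(-\operatorname{ad})$ applied appropriately intertwines $d$ and $d_V$. Concretely, I would try to prove by induction on $m$ that the degree-$(m{+}1)$ component (the $m$-th off-diagonal block) of $d^iF^i-F^{i+1}d^i$ vanishes, using relations (i)--(iii) to telescope the binomial-type coefficients $\tfrac{1}{m!}$ against $\tfrac{1}{(m-1)!}$.

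\textbf{The alternative, and perhaps more transparent, approach} is to avoid block bookkeeping entirely: observe that $F^i d^i (F^i)^{-1}$, read off block-entry by block-entry, must equal $d^i$ plus correction terms, and one checks these corrections are exactly $-S^i$ using only \eqref{DKKD} and the fact that conjugating a differential by $\exp(K)$ produces $d + [d,K] + \tfrac12[[d,K],K]+\cdots$; since $[d,K]=dK-Kd$ acts as $S$ (up to sign) and $[S,K]=0$ by \eqref{SKKS}, the series truncates after the first correction, giving $d-S=d_V$. This is really the assertion that the ``BGG twist'' is the exponential conjugation of the differential by the nilpotent connection $K$, and it is the conceptually correct statement; I would present the proof in this language, keeping the block-matrix display \eqref{matrix-F} only as an illustration.

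\textbf{The main obstacle} is purely organizational rather than deep: making sure the combinatorial coefficients in $\exp(\pm K^i)$ match up against the single correction term $S^i$ at every off-diagonal level, and confirming that the bracket relations (i)--(iii) — in particular that $[S,K]=0$ holds in the block-matrix form with the correct index shifts — are exactly what is needed for the higher brackets $[[d,K],K],\ [[[d,K],K],K],\dots$ to vanish. Once the commutativity \eqref{FdvF} is in hand, the cohomology statement $\mathcal H^k(Z^\bs,d_V^\bs)\cong\bigoplus_{j=0}^N\mathcal H^k(Z^{\bs,j},d^{\bs,j})$ follows by a one-line argument: an isomorphism of complexes induces an isomorphism on cohomology, and $(Z^\bs,d^\bs)$ is by construction the direct sum of the rows, whose cohomology is the direct sum of the row cohomologies.
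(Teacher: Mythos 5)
Your proposal is correct and follows essentially the same route as the paper: the paper proves $d^i(K^i)^m-(K^{i+1})^m d^i=mS^i(K^i)^{m-1}$ by induction from \eqref{DKKD} and \eqref{SKKS} and then sums the (finite) exponential series, which is exactly your observation that the conjugation $\exp(K)\,d\,\exp(-K)=d+[K,d]+\tfrac12[K,[K,d]]+\cdots$ truncates after one term because $[K,d]=-S$ and $[K,S]=0$. The only detail to settle when writing it out is the direction and sign of the conjugation (one needs $F^{i+1}\circ d^i\circ (F^i)^{-1}=d^i-S^i$, so the relevant expansion is the one in $\operatorname{ad}_{K}$, not $\operatorname{ad}_{-K}$), a point you already flag with ``up to sign.''
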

 \begin{proof}
 Since we have verified that each $F^i$ is an isomorphism, it remains to prove that the diagram \eqref{FdvF} commutes for each $i$.  We first use induction to prove that 
\begin{equation}\label{dKm}
d^i(K^i)^{m}-(K^{i+1})^{m}d^i=mS^i(K^i)^{m-1}, \quad m\geq 1.
\end{equation} 
For $m=1$, this is \eqref{DKKD}. Assume that \eqref{dKm} holds for $m=\ell$ and let us leave out the index $i$, i.e., $dK^{\ell}-K^{\ell}d=\ell SK^{\ell-1}$. Then it suffices to verify \eqref{dKm} for $m=\ell+1$. In fact,
\begin{align*}
  dK^{\ell+1}-K^{\ell+1}d&=dK^{\ell}K-K^{\ell+1}d=K^\ell dK+\ell SK^\ell-K^{\ell+1}d\\
                         &=K^{\ell}S+\ell SK^{\ell}=(\ell+1)SK^\ell,
\end{align*}
where the second last equality is by \eqref{SKKS}. Now writing \eqref{dKm} as $K^md=dK^m-SmK^{m-1}$ and summing appropriately, we readily get $\exp(K)\circ d=(d-S)\circ\exp(K)$ as claimed. 
\end{proof}

\subsection{The BGG construction}\label{sec:BGG}

The second step of the construction starts from the twisted complex and constructs new differentials on certain subspaces of the spaces $Z^i$. We will then prove that the cohomology of the resulting complex is isomorphic to the cohomology of the twisted complex. In this second step, we will not need the operators $K^{i,j}$ any more, so we will only need to assume that we have bounded operators $S^{i,j}$ that satisfy \eqref{DSSD} and \eqref{SS}. In addition, we need that for each $i,j$, the range $\ran(S^{i,j})$ is closed in $Z^{i+1,j-1}$, which of course implies that $\ran(S^i)$ is closed in $Z^{i+1}$.  

  The starting point for this is that \eqref{SS} implies that also $S^\bs$ defines a differential on $Z^\bs$, so the ``diagonals'' in \eqref{diag:multi-rows} are complexes, too. (In the geometric BGG construction, these correspond to point-wise standard complexes for Lie algebra cohomology.) In particular, by the assumption on closed ranges, we obtain a decomposition
\begin{equation}\label{algebraic-hodge}
Z^{i, j}=\ran(S^{i-1, j+1})\oplus \ran(S^{i-1, j+1})^{\perp}=\ran(S^{i-1, j+1})\oplus \ker(S^{i, j})^{\perp}\oplus \Upsilon^{i, j},
\end{equation}
where 
\begin{equation}\label{def:Upsilon}
\Upsilon^{i, j}:=\ran(S^{i-1, j+1})^{\perp}\cap \ker(S^{i, j})
\end{equation}
 plays the role of cohomology or of harmonic forms in a ``Hodge decomposition''. In the geometric BGG theory, the decomposition \eqref{algebraic-hodge} is obtained without involving an inner product, here we have used the Hilbert space structures (reflected in $\ran(S^{i-1, j+1})^{\perp}$) to simplify the presentation. {In what follows, we will use the projections onto the three summands of the direct sum decomposition in \eqref{algebraic-hodge} and denote them by $P_{\ran(S)}$, $P_{\ker(S)}$  and $P_\Upsilon$. If we want to make the degrees explicit, we write $P_{\ran(S^{i, j})}$, $P_{\ker(S^{i, j})}$ and $P_{\Upsilon^{i,j}}$, respectively.} 
 
Another important ingredient for Hodge theory {are adjoints, in our setting we use partial inverses} of the $S$ operators. With respect to the decomposition \eqref{algebraic-hodge}, we define bounded operators $T^{i, j}: Z^{i, j}\to Z^{i-1, j+1}$ by 
\begin{equation}\label{def:T}
T^{i, j}:=(S^{i-1, j+1})^{-1}\circ P_{\ran(S^{i-1, j+1})},
\end{equation}
where $(S^{i-1, j+1})^{-1}: \ran(S^{i-1, j+1})\to \ker(S^{i-1, j+1})^{\perp}$ is the inverse of the isomorphism $\ker(S^{i-1, j+1})^{\perp}\to\ran(S^{i-1, j+1})$ defined by $S^{i-1,j+1}$. By definition, $\ran(T^{i, j})=\ker(S^{i-1, j+1})^{\perp}$ is also closed. Of course, the operators $T^{i,j}$ can be collected into a bounded operator $T^i:Z^i\to Z^{i-1}$ with a closed range. 

\begin{lemma}
The operators defined above satisfy the following properties for any
$i, j$:
\begin{equation}\label{TT}
T^{i-1, j+1}\circ T^{i, j}=0,
\end{equation}
\begin{equation}\label{TST}
T^{i, j}\circ S^{i-1, j+1}\circ T^{i, j}=T^{i, j},
\end{equation}
\begin{equation}\label{STS}
S^{i, j}\circ T^{i+1, j-1}\circ S^{i, j}=S^{i, j}.
\end{equation}
\begin{equation}\label{STperp}
\ran(S^{i, j})=\ker(T^{i+1, j-1})^{\perp}, \quad \ran(T^{i, j})=\ker(S^{i-1, j+1})^{\perp}.
\end{equation}
\end{lemma}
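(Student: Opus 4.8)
The plan is to read all four identities off the defining formula $T^{i,j}=(S^{i-1,j+1})^{-1}\circ P_{\ran(S^{i-1,j+1})}$ together with the orthogonal splitting \eqref{algebraic-hodge}; in essence $T^{i,j}$ is nothing but the Moore--Penrose generalized inverse of $S^{i-1,j+1}$ with respect to the Hilbert-space structures, and \eqref{TT}--\eqref{STperp} are the standard relations for such a pseudoinverse together with the compatibility \eqref{SS}. First I would record the kernel and range of $T^{i,j}$: since $P_{\ran(S^{i-1,j+1})}$ has kernel $\ran(S^{i-1,j+1})^{\perp}$ and $(S^{i-1,j+1})^{-1}$ is injective, $\ker(T^{i,j})=\ran(S^{i-1,j+1})^{\perp}$; and since $(S^{i-1,j+1})^{-1}$ maps onto $\ker(S^{i-1,j+1})^{\perp}$, $\ran(T^{i,j})=\ker(S^{i-1,j+1})^{\perp}$, which is already the second equality in \eqref{STperp}. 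Next I would isolate, as a small sub-claim, the two one-sided inverse identities
\[
S^{i-1,j+1}\circ T^{i,j}=P_{\ran(S^{i-1,j+1})},\qquad T^{i+1,j-1}\circ S^{i,j}=P_{\ker(S^{i,j})^{\perp}},
\]
the first because $S^{i-1,j+1}\circ(S^{i-1,j+1})^{-1}$ is the identity on $\ran(S^{i-1,j+1})$, the second because $(S^{i,j})^{-1}\circ S^{i,j}$ is the projection onto $\ker(S^{i,j})^{\perp}$ along $\ker(S^{i,j})$.

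With these in hand the verifications are short. For \eqref{STperp}, the first equality follows from $\ker(T^{i+1,j-1})=\ran(S^{i,j})^{\perp}$ by taking orthogonal complements, using that $\ran(S^{i,j})$ is closed so that $(\ran(S^{i,j})^{\perp})^{\perp}=\ran(S^{i,j})$. For \eqref{TT}, I would combine $\ran(T^{i,j})=\ker(S^{i-1,j+1})^{\perp}$ with $\ker(T^{i-1,j+1})=\ran(S^{i-2,j+2})^{\perp}$ and the inclusion $\ran(S^{i-2,j+2})\subseteq\ker(S^{i-1,j+1})$ coming from \eqref{SS}, which gives $\ran(T^{i,j})\subseteq\ker(T^{i-1,j+1})$. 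For \eqref{TST}, compose the first one-sided identity with $T^{i,j}$ on the left: $T^{i,j}\circ S^{i-1,j+1}\circ T^{i,j}=T^{i,j}\circ P_{\ran(S^{i-1,j+1})}=T^{i,j}$, the last step because the formula for $T^{i,j}$ already starts with that projection. For \eqref{STS}, compose the second one-sided identity with $S^{i,j}$ on the left: $S^{i,j}\circ T^{i+1,j-1}\circ S^{i,j}=S^{i,j}\circ P_{\ker(S^{i,j})^{\perp}}=S^{i,j}$, since $S^{i,j}$ vanishes on $\ker(S^{i,j})$.

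I do not expect a genuine obstacle here; the only things needing care are the bookkeeping of indices and, more substantively, keeping track of exactly where the closed-range hypothesis on each $S^{i,j}$ is used: it is needed to have the splitting \eqref{algebraic-hodge} at all, to guarantee that $(S^{i-1,j+1})^{-1}$ and hence $T^{i,j}$ is bounded, and in the double-complement step in the proof of \eqref{STperp}. Stating the two one-sided identities explicitly at the outset is worthwhile since the same identities (in particular $S\circ T=P_{\ran(S)}$ and $T\circ S=P_{\ker(S)^{\perp}}$) reappear in the BGG construction that follows.
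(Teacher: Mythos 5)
Your proposal is correct and follows essentially the same route as the paper: both arguments read everything off the definition of $T^{i,j}$ as the partial inverse of $S^{i-1,j+1}$ on its range, use the inclusion $\ker(S^{i-1,j+1})^{\perp}\subseteq\ran(S^{i-2,j+2})^{\perp}$ (coming from \eqref{SS}) for \eqref{TT}, and the one-sided inverse identities $S\circ T=P_{\ran(S)}$, $T\circ S=P_{\ker(S)^{\perp}}$ for \eqref{TST} and \eqref{STS}. Your packaging is slightly more systematic (the paper leaves \eqref{STS} as ``similarly'' and treats \eqref{STperp} as immediate from the definitions), but the content is identical.
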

\begin{proof}
To prove \eqref{TT}, note that 
$$
T^{i-1, j+1}\circ T^{i, j}=(S^{i-2, j+2})^{-1}\circ P_{\ran(S^{i-2, j+2})}\circ (S^{i-1, j+1})^{-1}\circ P_{\ran(S^{i-1, j+1})}=0,
$$
since $(S^{i-1, j+1})^{-1}\circ P_{\ran(S^{i-1, j+1})}u\in  \ker(S^{i-1, j+1})^{\perp}\subset \ran(S^{i-2, j+2})^{\perp}$.

For \eqref{TST}, we have
\begin{align*}
T^{i, j}\circ S^{i-1, j+1}\circ T^{i, j}&=T^{i, j}\circ S^{i-1, j+1}\circ (S^{i-1, j+1})^{-1}\circ P_{\ran(S^{i-1, j+1})}\\&
=T^{i, j}\circ P_{\ran(S^{i-1, j+1})}=T^{i, j}.
\end{align*}
Similarly we can prove \eqref{STS}. 
\end{proof}
\begin{remark}\label{rmk:STbijective}
 By definition, if $S^{i, j}$ is bijective, then $T^{i+1,j-1}$ is just the inverse of $S^{i,j}$ and hence is bijective, too. Conversely, if $T^{i, j}$ is bijective, then from \eqref{TST} we have that $S^{i-1, j+1}$ is its inverse and thus also bijective.
\end{remark}

We next sketch the BGG construction. {The technical details are worked out in Section \ref{sect:cohomology} below, which also gives motivations for the definitions}.  By construction, $T^{i+1}d^{i}$ maps $Z^{i}$ to $Z^{i}$ but it ``moves each component down by a row'', i.e.\ it maps $Z^{i,j}$ to $Z^{i,j+1}$ and therefore is nilpotent. Thus we can define a bounded operator $G^{i}: Z^{i}\to Z^{i-1}$ by 
\begin{equation}\label{def:H}
G^{i}=-\sum_{k=0}^{\infty}(T^{i}d^{i-1})^{k}T^{i},
\end{equation}
where the sum in \eqref{def:H} is finite since $T^{i}d^{i-1}$ is nilpotent. Now define 
$$
\Upsilon^{i}:=\ran(S^{i-1})^{\perp}\cap \ker(S^{i}),
$$
 so this is the sum of the spaces $\Upsilon^{i,j}$ from \eqref{algebraic-hodge}. Here we use the convention that $\ran(S^{-1})^{\perp}=Z^{0}$ and $\ker(S^{N})=Z^{N}$. Further, define $A^{i}: \Upsilon^{i }\to Z^i$ by 
 \begin{equation}\label{def:A}
  A^{i}:=I-G^{i+1}d_{V}^{i}.
 \end{equation}
  As we shall verify below, $d_{V}^{i}A^{i}$ has values in $\ran(S^i)^{\perp}$, so using the projection from \eqref{algebraic-hodge} to define 
 \begin{equation}\label{def:D}
 D^{i}:=P_{\ker(S^{i+1})}d_{V}^{i}A^{i}=P_{\Upsilon^{i+1}}d_{V}^{i}A^{i},
 \end{equation}
  we get an operator $D^i:\Upsilon^i\to\Upsilon^{i+1}$. These operators form the BGG sequence
\begin{equation}\label{bgg-sequence}
\begin{tikzcd}
\cdots \arrow{r}&\Upsilon^{i-1}\arrow{r}{D^{i-1}}&\Upsilon^{i}\arrow{r}{D^{i}}&\Upsilon^{i+1}\arrow{r}{}&\cdots,
\end{tikzcd}
\end{equation}
We will see that \eqref {bgg-sequence} is a complex, that $A^{\bs}$ defines a morphism of complexes from \eqref{bgg-sequence} to the twisted complex, and that this induces an isomorphism in cohomology. The explicit matrix form of some operators in this section will be given in an appendix.

Before presenting the proof, we consider the example of a diagram with two rows.
 This is the setting of \cite{arnold2021complexes}, but we do not assume any injectivity/surjectivity conditions. For $N=2$, we have $G^{i}=-T^{i}$. Therefore
$$
Gd_{V}=\left ( 
\begin{array}{cc}
0 & 0\\
-T & 0 
\end{array}
\right )
\left ( 
\begin{array}{cc}
d & -S\\
0 & d 
\end{array}
\right )=\left ( 
\begin{array}{cc}
0 & 0\\
-Td & TS 
\end{array}
\right )=
\left ( 
\begin{array}{cc}
0 & 0\\
-Td & P_{\ker(S)^{\perp}}
\end{array}
\right ),
$$
and
$$
D= P_{\ker(S)}d_{V}A=
P_{\ker(S)}\left ( 
\begin{array}{cc}
d & -S\\
0 & d
\end{array}
\right )
\left ( 
\begin{array}{cc}
I & 0\\
Td & P_{\ker(S)}
\end{array}
\right )=
\left ( 
\begin{array}{cc}
 P_{\ran(S)^\perp} d & 0\\
 P_{\ker(S)}dTd & dP_{\ker(S)}
\end{array}
\right ).
$$

The BGG complex \eqref{bgg-sequence} is simplified with $\Upsilon^{i}=\ran(S^{i-1})^{\perp}\cap \ker(S^{i})$, and for $(\alpha_0, \alpha_1)\in \Upsilon^{i}$, 
\begin{equation}\label{BGG-operator}
 {D}^{i}\left (
\begin{array}{c}
\alpha_{0}\\
\alpha_{1}
\end{array}
\right )
=\left ( 
\begin{array}{c}
P_{\ran(S)^{\perp}}d\alpha_{0}\\
d\alpha_{1}+P_{\ker(S)}dTd\alpha_{0}
\end{array}
\right ).
\end{equation}
We have simplified the operator using the fact that $\alpha_{1}\in \ker(S^{i})$.

 Next we show how this formula simplifies in the presence of injectivity/surjectivity conditions. 
The discussions will involve three $S$ operators indicated in the following diagram
\begin{equation}\label{2row-diagram}
\begin{tikzcd}
\cdots \arrow{r} & Z^{i, 0}\arrow{r}{d^{i, 0}} &Z^{i+1, 0}  \arrow{r}{}&\cdots\\
\cdots \arrow{r}{d^{i-1, 1}} \arrow[ur, "{S^{i-1, 1}}"]& Z^{i, 1}\arrow{r}{d^{i, 1}} \arrow[ur, "{S^{i, 1}}"]&Z^{i+1, 1}\arrow[ur, "{S^{i+1,1}}"] \arrow{r}{}&\cdots,
 \end{tikzcd}
\end{equation}
and there are three cases.
\begin{enumerate}
\item Assume that $S^{i, 1}$ and $S^{i+1, 1}$ are injective. In this case $\ker(S^{i, 1})=0$, so $\alpha_{1}$ is eliminated from the system. The operator $ {D}^{i}$ is reduced to 
$$
 {D}^{i}\left (
\begin{array}{c}
\alpha_{0}\\
0
\end{array}
\right ):=\left ( 
\begin{array}{c}
P_{\ran(S)^{\perp}}d\alpha_{0}\\
0
\end{array}
\right ).
$$
\item Assume that $S^{i,1}$ is bijective. In this case $\ker(S^{i, 1})=0$, so $\alpha_{1}$ is eliminated from the system. 
The operator $ {D}^{i}$ is reduced to 
$$
 {D}^{i}\left (
\begin{array}{c}
\alpha_{0}\\
0
\end{array}
\right ):=\left ( 
\begin{array}{c}
0\\
 dTd\alpha_{0}
\end{array}
\right ).
$$
Note that we have removed $P_{\ker(S)}$ in front of $dTd\alpha_{0}$. From Remark \ref{rmk:STbijective}, $T=S^{-1}$ is bijective. Therefore $S(dTd)=-dSTd=0$. This implies that $dTd\alpha$ is automatically in $\ker(S)$.
\item Assume that  $S^{i-1,1}$ and $S^{i,1}$ are surjective. In this case $\ran(S)^{\perp}=0$, so $\alpha_{0}$ is eliminated from the system. The operator ${D}^{i}$ is reduced to 
$$
 {D}^{i}\left (
\begin{array}{c}
0\\
\alpha_{1}
\end{array}
\right ):=\left ( 
\begin{array}{c}
0\\
d\alpha_{1}
\end{array}
\right ).
$$
\end{enumerate}
The three differential operators in the elasticity complex in three dimensions \cite{arnold2021complexes} correspond to these three cases, respectively. 

\subsection{Cohomology of the BGG sequence}\label{sect:cohomology}
We next add the details for the construction of the sequence \eqref{bgg-sequence}, prove that it is a complex and show that it computes the same cohomology as the twisted complex \eqref{twisted-complex}. We prove this fact by exploiting the properties of $G^{i}$ and $A^{i}$. The explicit cohomology-preserving cochain maps will be given in the proof of Theorem \ref{thm:twisted-bgg} below. 

First we can verify the following properties of $G^{i}$:
\begin{equation}\label{H-defining1}
G^{i}|_{\ker(T^{i})}=0,
\end{equation}
\begin{equation}\label{H-defining2}
\phi-d_{V}^{i-1}G^{i}\phi\in \ker(T^{i}),
\end{equation}
\begin{equation}\label{H-defining3}
G^{i}\phi\in \ran(T^{i}).
\end{equation}
In fact, \eqref{H-defining1} and \eqref{H-defining3} are by the definition \eqref{def:H}. It suffices to verify \eqref{H-defining2}. To this end, we note that 
$$
T^{i}d_{V}^{i-1}T^{i}=T^{i}(d^{i-1}-S^{i-1})T^{i}=T^{i}d^{i-1}T^{i}-T^{i}S^{i-1}T^{i}=(-I+T^{i}d^{i-1})T^{i}.
$$
This implies that on $\ran(T^{i})$, we get  $T^{i}d_V^{i-1}=-I+T^{i}d^{i-1}$. Since $\ran(G)\subset \ran(T)$, we have $T^{i}d_{V}^{i-1}{G}^{i}=(-I+T^{i}d^{i-1})G^{i}=T^{i}$.

Indeed, property \eqref{H-defining2} provides the motivation for the construction of $G^i$: As we have noted above, restricting $T^id_V^{i-1}$ to $\ran(T^i)$, one obtains a map $\ran(T^i)\to\ran (T^i)$ that is nilpotent and hence {$I-T^id_V^{i-1}$ is invertible on that subspace}. Composing the inverse with $-T$, one obtains a map $Z^i\to Z^{i-1}$ that satisfies \eqref{H-defining2}, and $G^i$ is simply obtained from writing the inverse as a Neumann series.    

From   \eqref{H-defining1}-\eqref{H-defining3}, we have the following lemma.
\begin{lemma}\label{lem:proof1}
\begin{enumerate}
\item For any $\alpha\in \Upsilon^{i}$, we get $A^i\alpha\in\ran(S^{i-1})^\perp$ and $P_{\ker(S^i)}A^i\alpha=P_{\Upsilon^i}A^i\alpha=\alpha$. 
  Moreover, we get $d_{V}^{i}A^{i}(\alpha)\in \ran(S^{i})^{\perp}$.
\item Conversely, suppose that $\psi\in \ran(S^{i-1})^{\perp}$ has the property that $d_{V}^{i}\psi\in \ran(S^i)^{\perp}$. Then $\psi=A^{i}(\beta)$, where $\beta=P_{\ker(S^i)}\psi=P_{\Upsilon^i}\psi$.
\end{enumerate}
\end{lemma}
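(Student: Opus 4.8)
The plan is to prove both parts by unwinding the definitions $A^{i}=I-G^{i+1}d_{V}^{i}$ and \eqref{def:H}, using the three identities \eqref{H-defining1}--\eqref{H-defining3} for $G^{i}$ together with \eqref{STperp} and the decomposition \eqref{algebraic-hodge}. First I would record the dictionary that makes everything transparent: collecting \eqref{STperp} over rows and comparing with \eqref{algebraic-hodge} shows that, inside $Z^{i}$, one has $\ker(T^{i})=\ran(S^{i-1})^{\perp}$ and $\ran(T^{i+1})=\ker(S^{i})^{\perp}$, so that $\ran(S^{i-1})^{\perp}=\ran(T^{i+1})\oplus\Upsilon^{i}$ with $\ran(T^{i+1})$ playing the role of the middle summand; and, inside $Z^{i+1}$, $\ker(T^{i+1})=\ran(S^{i})^{\perp}$. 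I would also use two facts already available in the excerpt: on $\ran(T^{i+1})$ one has $T^{i+1}d_{V}^{i}=-I+T^{i+1}d^{i}$ (this identity was established while proving \eqref{H-defining2}), and $T^{i+1}d^{i}$ is nilpotent on $Z^{i}$ because it shifts each component down one row.

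For part (1): given $\alpha\in\Upsilon^{i}$, write $A^{i}\alpha=\alpha-G^{i+1}d_{V}^{i}\alpha$. By \eqref{H-defining3} the second term lies in $\ran(T^{i+1})$, and by \eqref{TT} $\ran(T^{i+1})\subseteq\ker(T^{i})=\ran(S^{i-1})^{\perp}$; since $\alpha\in\Upsilon^{i}\subseteq\ran(S^{i-1})^{\perp}$ as well, this gives $A^{i}\alpha\in\ran(S^{i-1})^{\perp}$. Because $\ran(T^{i+1})=\ker(S^{i})^{\perp}$ is orthogonal to $\Upsilon^{i}$, applying $P_{\Upsilon^{i}}$ kills the second term, so $P_{\Upsilon^{i}}A^{i}\alpha=\alpha$; and since $A^{i}\alpha$ already lies in $\ran(S^{i-1})^{\perp}=\ker(S^{i})^{\perp}\oplus\Upsilon^{i}$, on that subspace $P_{\ker(S^{i})}$ agrees with $P_{\Upsilon^{i}}$, hence $P_{\ker(S^{i})}A^{i}\alpha=\alpha$. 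For the last claim, $d_{V}^{i}A^{i}\alpha=d_{V}^{i}\alpha-d_{V}^{i}G^{i+1}(d_{V}^{i}\alpha)$, and \eqref{H-defining2} applied to $\phi=d_{V}^{i}\alpha$ (one index higher) places this in $\ker(T^{i+1})=\ran(S^{i})^{\perp}$.

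For part (2): set $\beta:=P_{\Upsilon^{i}}\psi$, which equals $P_{\ker(S^{i})}\psi$ because $\psi\in\ran(S^{i-1})^{\perp}$, and $\beta\in\Upsilon^{i}$, so part (1) applies to $\beta$. Put $\eta:=\psi-A^{i}\beta$. Then $\eta\in\ran(S^{i-1})^{\perp}$ (both summands are) and $P_{\Upsilon^{i}}\eta=\beta-\beta=0$, so $\eta$ lies in the middle summand $\ran(T^{i+1})$. Also $d_{V}^{i}\eta=d_{V}^{i}\psi-d_{V}^{i}A^{i}\beta\in\ran(S^{i})^{\perp}=\ker(T^{i+1})$, by the hypothesis on $\psi$ and by part (1), i.e.\ $T^{i+1}d_{V}^{i}\eta=0$. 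Substituting $\eta\in\ran(T^{i+1})$ into $T^{i+1}d_{V}^{i}=-I+T^{i+1}d^{i}$ gives $\eta=(T^{i+1}d^{i})\eta$, and iterating yields $\eta=(T^{i+1}d^{i})^{m}\eta$ for every $m\geq 1$; nilpotency of $T^{i+1}d^{i}$ then forces $\eta=0$, that is $\psi=A^{i}\beta$.

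I expect the only genuinely delicate point to be this last step of part (2): recognizing that $\eta$ has no $\ran(S^{i-1})$-component and no $\Upsilon^{i}$-component, hence must land in $\ran(T^{i+1})$, and then invoking the Neumann-series identity $T^{i+1}d_{V}^{i}=-I+T^{i+1}d^{i}$ on that subspace — which is exactly the structural mechanism behind the definition of $G^{i+1}$ — to conclude $\eta=0$. Everything else is bookkeeping with the orthogonal decomposition \eqref{algebraic-hodge} and the relations \eqref{TT} and \eqref{STperp}. (For the spaces to match, part (1)'s first assertion should be read as $A^{i}\alpha\in\ran(S^{i-1})^{\perp}$ and part (2)'s standing hypothesis as $d_{V}^{i}\psi\in\ran(S^{i})^{\perp}$.)
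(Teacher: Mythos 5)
Your proof is correct, and you are also right that the statement contains two index typos ($A^i\alpha\in\ran(S^{i-1})^\perp$, not $\ran(S^i)^\perp$, and $d_V^i\psi\in\ran(S^i)^\perp$, not $\ran(S^{i-1})^\perp$); the paper's own proof and its later applications in Theorem \ref{thm:twisted-bgg} confirm these readings. Part (1) of your argument coincides with the paper's (which simply invokes $\ran(G^{i+1})\subset\ker(S^i)^\perp$ and applies \eqref{H-defining2} to $d_V^i\alpha$). In part (2) you follow the same reduction as the paper — set $\eta=\psi-A^i\beta$, observe $\eta\in\ker(S^i)^\perp$ and $d_V^i\eta\in\ran(S^i)^\perp$, and show $\eta=0$ — but you finish differently. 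The paper writes $\eta=(\eta^0,\dots,\eta^N)$ in components, notes $\eta^0=0$, and compares the rows of $d^i\eta$ and $S^i\eta$ to kill the components one at a time by induction, using that $d^i$ is block-diagonal while $S^i$ shifts rows. You instead convert $d_V^i\eta\in\ran(S^i)^\perp=\ker(T^{i+1})$ into $T^{i+1}d_V^i\eta=0$, apply the identity $T^{i+1}d_V^i=-I+T^{i+1}d^i$ on $\ran(T^{i+1})$ to get $\eta=(T^{i+1}d^i)^m\eta$, and conclude from nilpotency of $T^{i+1}d^i$. Both are valid; your version is coordinate-free and reuses exactly the mechanism that makes $G^{i+1}$ well defined (as the paper itself points out in the motivating remark after \eqref{H-defining3}), while the paper's component argument is more elementary and makes the row-by-row structure explicit. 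The one point to be careful about — which you handle correctly — is that the identity $T^{i+1}d_V^i=-I+T^{i+1}d^i$ is only valid on $\ran(T^{i+1})$, so you genuinely need the preliminary step placing $\eta$ in $\ran(T^{i+1})=\ker(S^i)^\perp$ before invoking it.
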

\begin{proof}
\begin{enumerate}
\item Since $G^{i+1}$ has values in $\ker(S^{i})^{\perp}$, the first two properties follow readily. Moreover, $d_{V}^{i}A^{i}(\alpha)=d_{V}^{i}\alpha-d_{V}^{i}G^{i+1}d_{V}^{i}\alpha$, so applying \eqref{H-defining2} to $d_{V}^{i}\alpha$, the last property follows.
\item From part 1, we conclude that $\psi-A^{i}(\beta)$ lies in $\ker(S^{i})^{\perp}$ and satisfies $d_{V}^{i}(\psi-A^{i}(\beta))\in \ran(S^{i})^{\perp}$. Thus it suffices to prove that $\psi\in \ker(S^{i})^{\perp}$ and $d_{V}^{i}\psi\in \ran(S^{i})^{\perp}$ imply $\psi=0$. But the latter condition says that $d^{i}\psi-S^{i}\psi\in \ran(S^{i})^{\perp}$. Thus the component of $d^{i}\psi$ in ${\ran(S^{i})}$ is ${S^{i}(\psi)}$. Writing $\psi=(\psi^0, \psi^1, \cdots, \psi^N)$, $\psi\in \ker(S^{i})^\perp$ implies $\psi^0=0$.  Then $d^{i}\psi=(0, d^{i, 1}\psi^1, \cdots, d^{i, N}\psi^N)$ while $S^{i}\psi=(S^{i, 1}\psi^1,  \cdots, S^{i, N-1}\psi^{N-1}, 0)$. Comparing the first component, we get $S^{i, 1}\psi^1=0$ and thus $\psi^1=0$ since $\psi^1\in \ker(S^{i, 1})^\perp$. Similarly, we get $S^{i, 2}\psi^2=0$ and thus $\psi^2=0$. Repeating this argument, we see that $\psi=0$. 
\end{enumerate}
\end{proof}

  Using this, we can formulate our second main result.

  \begin{theorem}\label{thm:twisted-bgg} Assume that we start from the twisted complex \eqref{twisted-complex} associated to \eqref{diag:multi-rows} with bounded operators $S^{i,j}$ {with closed range} such that \eqref{DSSD} and \eqref{SS} are satisfied. Then the BGG sequence \eqref{bgg-sequence} is a complex, whose cohomology is isomorphic to the cohomology of the twisted complex \eqref{twisted-complex} (and thus isomorphic to the cohomology of the input by Theorem \ref{thm:input-twisted}). The map $A^i$ and the map $B^{i}$ defined by \eqref{def:B} in the proof are complex maps that induce inverse isomorphisms in cohomology. 
\end{theorem}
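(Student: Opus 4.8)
The plan is to promote the map $A^{\bs}$ to a genuine cochain map, deduce from this that $D^{\bs}$ squares to zero, construct a cochain map $B^{\bs}$ in the opposite direction, and finally establish the operator identities $B^{i}A^{i}=\mathrm{Id}_{\Upsilon^{i}}$ and $A^{i}B^{i}=\mathrm{Id}_{Z^{i}}-d_{V}^{i-1}G^{i}-G^{i+1}d_{V}^{i}$. Together these make $A^{\bs}$ and $B^{\bs}$ into mutually inverse cochain homotopy equivalences, so they induce inverse isomorphisms in cohomology, and composing with Theorem~\ref{thm:input-twisted} yields the claimed description of $\mathcal{H}^{\bs}(\Upsilon^{\bs},D^{\bs})$. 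Throughout I work with the collected operators $d^{i},S^{i},T^{i},G^{i}$ and the orthogonal splitting $Z^{i}=\ran(S^{i-1})\oplus\Upsilon^{i}\oplus\ker(S^{i})^{\perp}$, using repeatedly that $\ran(S^{i-1})\subseteq\ker(S^{i})$, that $\ker(T^{i})=\ran(S^{i-1})^{\perp}$ (immediate from the definition of $T$), and that $\ran(T^{i+1})=\ker(S^{i})^{\perp}$ (collected form of \eqref{STperp}); note $\ker(S^{i})^{\perp}\subseteq\ran(S^{i-1})^{\perp}=\ker(T^{i})$.

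\emph{Step 1 ($A^{\bs}$ is a cochain map, $D^{\bs}$ a complex).} For $\alpha\in\Upsilon^{i}$ set $\phi:=d_{V}^{i}A^{i}\alpha$. Lemma~\ref{lem:proof1}(1) gives $\phi\in\ran(S^{i})^{\perp}$, and $d_{V}^{i+1}\phi=d_{V}^{i+1}d_{V}^{i}A^{i}\alpha=0\in\ran(S^{i})^{\perp}$, so Lemma~\ref{lem:proof1}(2) in degree $i+1$ shows $\phi=A^{i+1}\beta$ with $\beta=P_{\Upsilon^{i+1}}\phi=P_{\Upsilon^{i+1}}d_{V}^{i}A^{i}\alpha=D^{i}\alpha$; hence $d_{V}^{i}A^{i}=A^{i+1}D^{i}$. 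Since $P_{\Upsilon^{i}}A^{i}=\mathrm{Id}_{\Upsilon^{i}}$ (Lemma~\ref{lem:proof1}(1)), each $A^{i}$ is injective on $\Upsilon^{i}$; then $A^{i+2}(D^{i+1}D^{i}\alpha)=d_{V}^{i+1}(A^{i+1}D^{i}\alpha)=d_{V}^{i+1}d_{V}^{i}A^{i}\alpha=0$ forces $D^{i+1}D^{i}=0$.

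\emph{Step 2 (the opposite map, the key telescoping identity, and the composites).} I define
\begin{equation}\label{def:B}
B^{i}:=P_{\Upsilon^{i}}\circ\bigl(\mathrm{Id}-d_{V}^{i-1}G^{i}\bigr):Z^{i}\to\Upsilon^{i},
\end{equation}
which indeed lands in $\Upsilon^{i}$ because $(\mathrm{Id}-d_{V}^{i-1}G^{i})\phi\in\ker(T^{i})=\Upsilon^{i}\oplus\ker(S^{i})^{\perp}$ by \eqref{H-defining2}. The computational heart of the proof is
\[
G^{i}\,d_{V}^{i-1}\big|_{\ran(T^{i})}=\mathrm{Id}_{\ran(T^{i})},
\]
which follows by telescoping: if $\nu\in\ran(T^{i})$ then $T^{i}S^{i-1}\nu=\nu$ by \eqref{TST}, so with $x:=T^{i}d^{i-1}$ (nilpotent) one computes $G^{i}d_{V}^{i-1}\nu=-\sum_{k\ge0}x^{k}(x\nu-\nu)=\sum_{k\ge0}(x^{k}\nu-x^{k+1}\nu)=\nu$. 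Using \eqref{H-defining1}, \eqref{H-defining3} and \eqref{TT} one gets $G^{i}A^{i}\alpha=0$ for $\alpha\in\Upsilon^{i}$, hence $B^{i}A^{i}\alpha=P_{\Upsilon^{i}}A^{i}\alpha=\alpha$. For the other composite, write $(\mathrm{Id}-d_{V}^{i-1}G^{i})\phi=\mu+\nu$ with $\mu=B^{i}\phi\in\Upsilon^{i}$ and $\nu\in\ker(S^{i})^{\perp}=\ran(T^{i+1})$; then $A^{i}B^{i}\phi=A^{i}\mu=\mu-G^{i+1}d_{V}^{i}\mu=\mu+\nu-G^{i+1}d_{V}^{i}\phi=\phi-d_{V}^{i-1}G^{i}\phi-G^{i+1}d_{V}^{i}\phi$, where I used $d_{V}^{i}\mu=d_{V}^{i}\phi-d_{V}^{i}\nu$ and $G^{i+1}d_{V}^{i}\nu=\nu$. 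Thus $A^{i}B^{i}=\mathrm{Id}_{Z^{i}}-d_{V}^{i-1}G^{i}-G^{i+1}d_{V}^{i}$, i.e. $A^{\bs}B^{\bs}\simeq\mathrm{Id}$ via $h^{i}=-G^{i}$. Finally $B^{\bs}$ is a cochain map: from $A^{i+1}D^{i}B^{i}=d_{V}^{i}A^{i}B^{i}=d_{V}^{i}-d_{V}^{i}G^{i+1}d_{V}^{i}$ and $A^{i+1}B^{i+1}d_{V}^{i}=(\mathrm{Id}-d_{V}^{i}G^{i+1}-G^{i+2}d_{V}^{i+1})d_{V}^{i}=d_{V}^{i}-d_{V}^{i}G^{i+1}d_{V}^{i}$, together with injectivity of $A^{i+1}$ on $\Upsilon^{i+1}$, I conclude $D^{i}B^{i}=B^{i+1}d_{V}^{i}$.

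\emph{Conclusion.} With $A^{\bs},B^{\bs}$ cochain maps satisfying $B^{\bs}A^{\bs}=\mathrm{Id}$ and $A^{\bs}B^{\bs}\simeq\mathrm{Id}$, the induced maps in cohomology are inverse isomorphisms $\mathcal{H}^{\bs}(\Upsilon^{\bs},D^{\bs})\cong\mathcal{H}^{\bs}(Z^{\bs},d_{V}^{\bs})$, and Theorem~\ref{thm:input-twisted} identifies the right-hand side with $\bigoplus_{j}\mathcal{H}^{\bs}(Z^{\bs,j},d^{\bs,j})$. I expect the main obstacle to be the identity $A^{i}B^{i}=\mathrm{Id}-d_{V}^{i-1}G^{i}-G^{i+1}d_{V}^{i}$: it hinges on carefully separating the three summands of $Z^{i}$, on knowing that both $\ran(G^{i})$ and $\ker(S^{i})^{\perp}$ sit inside $\ker(T^{i})$ while $\ker(S^{i})^{\perp}=\ran(T^{i+1})$, and on inserting the telescoping identity $G^{i+1}d_{V}^{i}|_{\ran(T^{i+1})}=\mathrm{Id}$ at exactly the right place; once that is in hand, the cochain-map property of $B^{\bs}$ and the homotopy follow formally, and the remainder is bookkeeping with the decomposition \eqref{algebraic-hodge}.
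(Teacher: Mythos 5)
Your proof is correct and follows essentially the same route as the paper: the identity $d_V^iA^i=A^{i+1}D^i$ via Lemma \ref{lem:proof1}, the same definition of $B^i$, the homotopy formula $A^iB^i=\mathrm{Id}-d_V^{i-1}G^i-G^{i+1}d_V^i$ with $B^iA^i=\mathrm{Id}$, and the conclusion via Theorem \ref{thm:input-twisted}. The only (harmless) variation is that you derive the $A^iB^i$ identity by a direct telescoping computation with $G^{i+1}d_V^i|_{\ran(T^{i+1})}=\mathrm{Id}$, where the paper instead invokes the uniqueness part of Lemma \ref{lem:proof1}(2) a second time.
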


\begin{proof}
  Part 1 of Lemma \ref{lem:proof1} shows that $D^{i}\alpha$ is the component of $d_V^{i}A^{i}\alpha$ in $\Upsilon^{i+1}$. Since $d_V^{i}A^{i}\alpha\in \ran(S^{i})^\perp$ and $d_V^{i+1}d_V^{i}A^{i}\alpha=0$, part (2) of Lemma \ref{lem:proof1} shows that
  \begin{equation}\label{dVAAD}
    d_V^{i}A^{i}\alpha=A^{i+1}D^{i}\alpha.
  \end{equation}
  This in turn readily implies $d_V^{i+1}A^{i+1}D^{i}\alpha=0$ and hence $D^{i+1}D^i\alpha=0$. Hence the BGG sequence is a complex and \eqref{dVAAD} shows that $A^\bs$ defines a morphism of complexes from the BGG complex to the twisted complex. 

Now for a form $\phi$ consider $\tilde{\phi}:=\phi-d_V^{i-1}G^{i}\phi$. By \eqref{H-defining2}, this lies in $\ran(S^{i-1})^\perp$ and we define $B^{i}(\phi)$ to be its component in $\Upsilon^{i}$, i.e., 
\begin{equation}\label{def:B}
B^{i}\phi:=P_{\Upsilon^{i}}(I-d_{V}^{i-1}G^{i})\phi,
\end{equation}
where $P_{\Upsilon^{i}}$ is the projection to $\ran(S^{i-1})^{\perp}\cap \ker(S^{i})$.
 Now consider 
$$
\psi:=\tilde{\phi}-G^{i+1}d_V^{i}\tilde{\phi}=\phi-d_V^{i-1}G^{i}\phi-{G}^{i+1}d_V^{i}\phi.
$$
Then $\psi$ lies in $\ran(S^{i-1})^\perp$ and $d_V^{i}\psi=d_V^{i}\phi-d_V^{i}{G}^{i+1}d_V^{i}\phi$, which as above, lies in $\ran(S^{i})^\perp$ by \eqref{H-defining2}. Moreover, the component of $\psi$ in $\Upsilon^{i}$ still equals $B^{i}\phi$, so by part (2) of Lemma \ref{lem:proof1}, we get 
\begin{equation}\label{ABphi}
A^{i}B^{i}(\phi)=\psi=\phi-d_V^{i-1}{G}^{i}\phi-{G}^{i+1}d_V^{i}\phi.
\end{equation}
Now $B^{i+1}d_V^{i}\phi$ is the component in $\Upsilon^{i+1}$ of $d_V^{i}\phi-d_V^{i}{G}^{i+1}d_V^{i}\phi=d_V^{i}A^{i}B^{i}\phi$. This shows that $B^{i+1}d_V^{i}\phi=D^{i}B^{i}\phi$, so $B$ is a morphism of complexes, too. Equation \eqref{ABphi} then shows that $AB$ is chain homotopic to the identity. On the other hand, since $A^{i}\alpha=\alpha-{G}^{i+1}d_V^{i}\alpha\in \ran(S^{i-1})^\perp$, we get ${G}^{i}A^{i}\alpha=0$. Hence $B^{i}A^{i}\alpha$ simply is the component in $\Upsilon^{i}$ of $A^{i}\alpha$ which equals $\alpha$. Thus $B^{i}A^{i}=I$ and we see that $B$ and $A$ induce isomorphisms in cohomology which are inverse to each other. 
\end{proof}

  \section{Examples}\label{sec:examples}
  
  We demonstrate several examples of the abstract framework. In particular, these include some complexes in \cite{beig2020linearised} with applications in general relativity. Following the general framework, for each example we should specify the diagram \eqref{diag:multi-rows}, i.e., the spaces $Z^{i, j}$, the operators $d^{i, j}$, $K^{i, j}$ (and thus $S^{i, j}$ defined by \eqref{DKKD}), such that the assumption \eqref{SKKS} holds.  We'll say more about how these operators are obtained and the relation to the constructions in \cite{vcap2001bernstein} (that apply in a smooth setting) in Section \ref{sec:background} below.
  As we shall see below, to derive complexes with more general function spaces (e.g.,  Sobolev spaces), these algebraic structures are not enough due to regularity issues. With a similar idea as \cite{arnold2021complexes}, this subtlety caused by the regularity of functions can be fixed by requiring in addition that the input complexes have a uniform representation of cohomology, not depending on the regularity. For de Rham sequences, this boils down to the results by Costabel and McIntosh \cite{costabel2010bogovskiui}. However, the argument given here is different from the proof in \cite{arnold2021complexes} based on Poincar\'e operators.

  In Section \ref{sec:BGG}, we have seen that for diagrams with two rows and assuming injective/surjectivity conditions on the $S$-operators, we recover the examples from \cite{arnold2021complexes}, so we focus on diagrams with at least three rows.

  Throughout the rest of this paper, we assume that $U$ is a bounded Lipschitz domain in $\mathbb{R}^{n}$. Thus the results in \cite{costabel2010bogovskiui} {apply} and in particular there exists a uniform representation of cohomology for the de Rham complex, c.f.\ \cite[Theorem 1.1]{costabel2010bogovskiui}.

 \subsection{The structure of the examples}\label{sec:struct}
    To set up one of our examples, we fix finite dimensional vector spaces $\mathbb V_j$ and real numbers $q_j$ for $j=0,\dots,N$, and the examples apply to bounded Lipschitz domains $U\subset\mathbb R^n$, mostly with $n=3$. The spaces $Z^{i,j}$ can be interpreted as differential $i$-forms on $U$ with values in $\mathbb V_j$ or equivalently as functions to $\Lambda^i\mathbb R^{n*}\otimes\mathbb V_j$ of appropriate regularity. The operators we construct all make sense in the smooth setting and the identities can usually be obtained respectively checked there, but the main case to consider is that the functions   in $Z^{i,j}$ lie in the Sobolev space $H^{q_j-i}$. The operators $d^{i,j}:Z^{i,j}\to Z^{i+1,j}$ are all induced by the exterior derivative, so since the regularity in the target is always one lower than in the source, they are bounded. The regularities $q_j$ depend on some fixed number $q\in\mathbb R$, but we'll need different choices for the two steps and a key lemma to connect the two steps. In the first step we will take $q_j=q$ for all $j$, while in the second step $q_j=q-j$ for each $j$.

    To start with the first step, we take $q_j=q$ for all $j$, and hence $Z^{i,j}=H^{q-i}\otimes\Lambda^i\mathbb R^{n*}\otimes\mathbb V_j$. The operators $K^{i,j}$ will always be built up from the operators of multiplication by bounded smooth functions, so since the regularity in $Z^{i,j}$ and $Z^{i,j-1}$ are the same, they define bounded operators $Z^{i,j}\to Z^{i,j-1}$ for each $i$ and $j$. One next verifies directly that $S^{i,j}:=d^{i,j-1}K^{i,j}-K^{i+1,j}d^{i,j}$ in the smooth case satisfies \eqref{SKKS}, so this extends to the Sobolev setting, and all the assumptions for Theorem \ref{thm:input-twisted} are satisfied. Thus we conclude that the complex $(Z^\bs,d_V^\bs)$ is isomorphic to the sum of the Sobolev de Rham complexes with values in $\mathbb V_j$, and hence its cohomology is well understood. 

    To pass to the second step, we need an additional observation: In each of the examples, one verifies directly that  the operator $S^{i,j}$ defined by \eqref{DKKD} maps a smooth  function $\phi$ to $\partial^{i,j}\circ\phi$, where $\partial^{i,j}:\Lambda^i\mathbb R^{n*}\otimes\mathbb V_j\to \Lambda^{i+1}\mathbb R^{n*}\otimes\mathbb V_{j-1}$ is a linear map. Hence also in the Sobolev setting $S^{i,j}$ is given by composition with $\partial^{i,j}$. This readily shows that $S^{i,j}$ also extends as a bounded linear operator $\tilde Z^{i,j}\to\tilde Z^{i+1,j-1}$ where $\tilde Z^{i,j}:=H^{q-i-j}\otimes\Lambda^i\mathbb R^{n*}\otimes\mathbb V_j$. The identities \eqref{DSSD} and \eqref{SS} of course extend to this regularity since they hold on smooth forms. Moreover, $\ran(S^{i,j})\subset \tilde Z^{i+1,j-1}$ is the subspace $H^{q-i-j}\otimes\ran(\partial^{i,j})$, which is evidently closed in $\tilde Z^{i+1,j-1}$ (and the change of regularity is needed to get this). Hence all conditions needed to apply Theorem \ref{thm:twisted-bgg} are satisfied, and it remains   to understand the cohomology of $(\tilde Z^\bs,d_V^\bs)$. This is described by the following key lemma, which crucially depends on the existence of uniform smooth representatives for the Sobolev-de Rham cohomology. To formulate this, let us write $Z_q^\bs$ for the complex in regularity $q$, {i.e., $Z_{q}^{i, j}:=H^{q-i}\otimes\Lambda^i\mathbb R^{n*}\otimes\mathbb V_j$.} Then by definition $Z_q^{i,j}\subset\tilde Z^{i,j}\subset Z_{q-N}^{i,j}$, so we have inclusions $\mu:Z_q^\bs\to \tilde Z^\bs$ and $\nu:\tilde Z^\bs\to Z_{q-N}^\bs$.  

    \begin{lemma}\label{lem:change-reg}
 Assume that $U$ is a bounded Lipschitz domain. Then
      in the notation introduced above, both $\mu:(Z_q^\bs,d_V^\bs)\to (\tilde Z^\bs,d_V^\bs)$ and $\nu:(\tilde Z^\bs,d_V^\bs)\to (Z_{q-N}^\bs,d_V^\bs)$ are inclusions of subcomplexes that induce isomorphisms in cohomology.  
    \end{lemma}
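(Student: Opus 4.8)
My plan is to exploit the filtration of each of the three complexes by row index, which is preserved by the twisted differential precisely because the connecting maps $S^{i,j}$ lower the row index by one. First I would dispose of the easy assertions: $\mu$ and $\nu$ are injective by construction, and they are chain maps because --- as established just above --- $d_V=d-S$ is a well-defined bounded operator on each of $Z_q^\bullet$, $\tilde Z^\bullet$ and $Z_{q-N}^\bullet$ and is given on all three by literally the same formula, so the set-theoretic inclusions intertwine the three copies of $d_V$. Thus $\mu$ and $\nu$ are inclusions of subcomplexes, and the real content is that they are quasi-isomorphisms.

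Next I would introduce, on each of the three complexes, the increasing filtration $F_p^\bullet:=\bigoplus_{j=0}^p Z^{\bullet,j}$ by the largest row index occurring, so that $0=F_{-1}^\bullet\subseteq F_0^\bullet\subseteq\cdots\subseteq F_N^\bullet=Z^\bullet$. Since $d^{i,j}$ stays in row $j$ while $S^{i,j}$ maps row $j$ into row $j-1$, the operator $d_V$ carries $F_p^\bullet$ into itself, so each $F_p^\bullet$ is a subcomplex; moreover on the subquotient $F_p^\bullet/F_{p-1}^\bullet$ the $S$-part of $d_V$ vanishes, so $F_p^\bullet/F_{p-1}^\bullet\cong(Z^{\bullet,p},d^{\bullet,p})$ as complexes. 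For each of our three complexes this $p$-th row complex is a finite direct sum of copies of a Sobolev de-Rham complex on $U$ --- at form degree $i$ the regularity is $q-i$ for $Z_q$, $q-i-p$ for $\tilde Z$, and $q-N-i$ for $Z_{q-N}$. The maps $\mu$ and $\nu$ respect the filtration (they preserve the row index exactly), and the induced maps on the $p$-th subquotient are, at each form degree, the componentwise inclusions of Sobolev spaces $H^{q-i}\hookrightarrow H^{q-i-p}$ and $H^{q-i-p}\hookrightarrow H^{q-i-N}$ respectively.

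The single substantive input now enters: by the uniform, regularity-independent representation of de-Rham cohomology on a bounded Lipschitz domain (Costabel--McIntosh, \cite[Theorem 1.1]{costabel2010bogovskiui}), which is one of our standing hypotheses on $U$, the componentwise inclusion of the Sobolev de-Rham complex in regularity $s$ (degree-$i$ term in $H^{s-i}$) into the one in regularity $s'$ is a quasi-isomorphism whenever $s\geq s'$, and tensoring with the fixed finite-dimensional space $\mathbb{V}_p$ changes nothing. Hence $\mu$ and $\nu$ induce isomorphisms on the cohomology of every graded piece $F_p^\bullet/F_{p-1}^\bullet$. I would then finish by induction on $p$: the base case $p=0$ is exactly the statement just proved for the row-$0$ complex, and for the inductive step I apply the five lemma to the map of long exact cohomology sequences associated (via $\mu$, resp.\ $\nu$) to $0\to F_{p-1}^\bullet\to F_p^\bullet\to F_p^\bullet/F_{p-1}^\bullet\to 0$, combining the inductive hypothesis on $F_{p-1}^\bullet$ with the previous step on the quotient. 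Taking $p=N$ gives the lemma. (Equivalently, one may invoke the comparison theorem for the spectral sequences of these bounded filtrations: the chain map induces an isomorphism on the $E_1$-page, hence on $E_\infty$ and on total cohomology.)

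I do not expect a serious obstacle: the only geometric fact used is the regularity-independence of de-Rham cohomology, already available by hypothesis, and everything else is bookkeeping. The two points needing genuine care are verifying that $d_V$ really does preserve the row filtration on each of the three complexes --- this is precisely where splitting the argument into two steps matters, since $d_V$ is bounded on $\tilde Z^\bullet$ whereas the operators $K^{i,j}$ are not --- and correctly identifying the graded pieces with bona fide Sobolev de-Rham complexes so that \cite{costabel2010bogovskiui} applies to them.
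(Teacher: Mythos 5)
Your proof is correct, but it takes a genuinely different route from the paper's. The paper first observes that the composite $\nu\circ\mu$ is, via conjugation by the isomorphism $F=\exp(K)$ of Theorem \ref{thm:input-twisted} (which is available on $Z_q^\bs$ and $Z_{q-N}^\bs$ but, as you rightly note, not on $\tilde Z^\bs$), the inclusion of untwisted Sobolev--de-Rham complexes, hence a quasi-isomorphism by \cite{costabel2010bogovskiui}; this gives injectivity of $\mu$ and surjectivity of $\nu$ on cohomology, and the remaining point --- injectivity of $\nu$ on cohomology --- is then settled by an explicit backwards induction over the row index, repeatedly using the Costabel--McIntosh result to upgrade the regularity of a primitive one row at a time (with a correction term $S\beta$ pushed into the next row down). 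Your argument replaces this hands-on cocycle chase by the standard comparison argument for the row filtration $F_p^\bs=\oplus_{j\le p}Z^{\bs,j}$, which $d_V$ preserves because $S$ strictly lowers the row index, identifying the graded pieces with Sobolev--de-Rham complexes of the appropriate regularity and concluding by the five lemma (equivalently, the spectral sequence comparison theorem). Both proofs consume exactly the same analytic input, namely the regularity-independence of Sobolev--de-Rham cohomology on a bounded Lipschitz domain; yours is more systematic, treats $\mu$ and $\nu$ symmetrically in one stroke, and makes transparent that the only issue is the behaviour on the associated graded, while the paper's version is more elementary (no homological machinery beyond the long exact sequence is even implicitly needed) and produces explicit improved primitives, which is in the spirit of the constructive flavour of the rest of the paper. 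One small point worth making explicit in your write-up: the long exact sequences and the five lemma are applied to algebraic cohomology of complexes of vector spaces, so no closed-range or topological hypotheses are needed at that stage --- which is indeed the case here.
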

    \begin{proof} 
      By construction, both $\mu$ and $\nu$ are compatible with the differentials $d_V$ and hence induce maps in cohomology. The composition $\nu\circ\mu$ is just the inclusion $(Z_q^\bs,d^\bs_V)\to (Z_{q-N}^\bs,d_V^\bs)$. Via the isomorphisms $F$ from Theorem \ref{thm:input-twisted}, this is conjugate to the inclusion $(Z_q^\bs,d^\bs)\to (Z_{q-N}^\bs,d^\bs)$ of a sum of Sobolev-de Rham complexes. The latter induces isomorphisms in cohomology by \cite{costabel2010bogovskiui}, and this carries over to the $d_V$ complexes via $F$. This implies that $\mu$ induces an injection in cohomology, while $\nu$ induces a surjection in cohomology and to complete the proof, it suffices to show that the map in cohomology induced by $\nu$ is injective.

      To prove this, we have to take forms $\psi_j\in\tilde Z^{i,j}$ for some $i$ and $j=0,\dots ,N$ such that there are forms $\phi_j\in Z_{q-N}^{i-1,j}$ such that $d_V(\phi_0,\dots,\phi_N)=(\psi_{0},\dots,\psi_N)$ (which of course implies that $d_V(\psi_{0},\dots,\psi_N)=0$). We then have to show that we can also find forms $\tilde\phi_j\in\tilde Z^{i-1,j}$ such that $d_V(\tilde\phi_0,\dots,\tilde\phi_N)=(\psi_{0},\dots,\psi_N)$, i.e.~we have to improve the regularity of the $j$th component from {$H^{q-N+1}$ to $H^{q-j+1}$}. In particular, there is nothing to do for $j=N$ and we use this as the starting point of a backwards induction. So take $j<N$ and assume that $\phi_k\in\tilde Z^{i,k}$ for $k=j+1,\dots,N$. Then by assumption, the $j$th component of $d_V(\phi_0,\dots,\phi_N)=(\psi_{0},\dots,\psi_N)$ reads as $\psi_j=d\phi_j-S\phi_{j+1}$. By the inductive hypothesis, $\phi_{j+1}\in\tilde Z^{i-1,j+1}$ so $S\phi_{j+1}\in\tilde Z^{i,j}$. Hence $\psi_j+S\phi_{j+1}$ lies in $H^{q-j}$ and by assumption $\psi_j+S\phi_{j+1}=d\phi_j$. But this means that $\psi_j+S\phi_{j+1}$ is exact in the sum of Sobolev de Rham complexes in regularity $q-N$, so it is also exact in regularity $q-j$ by \cite{costabel2010bogovskiui}. Hence there is a form $\hat\phi_j\in \tilde Z^{i-1,j}$ such that $\psi_j+S\phi_{j+1}=d\hat\phi_j$. Now $d(\hat\phi_j-\phi_j)=0$, so by \cite{costabel2010bogovskiui}, we can write $\hat\phi_j=\phi_j+\alpha+d\beta$, where $\alpha$ is a smooth representative for the cohomology class of $\hat\phi_j-\phi_j$ and $\beta\in Z_{q-N}^{i-2,j}$. Now defining $\tilde\phi_j:=\hat\phi_j-\alpha\in \tilde Z^{i-1,j}$, we get $d\tilde\phi_j=d\hat\phi_j=\psi_j+S\phi_{j+1}$ and $S\tilde\phi_j=S\phi_j-dS\beta$. Hence if we in addition replace $\phi_{j-1}$ by $\tilde\phi_{j-1}=\phi_{j-1}-S\beta$ and put $\tilde\phi_\ell=\phi_\ell$ for $\ell\neq j,j-1$, we get $d_V(\tilde\phi_0,\dots,\tilde\phi_N)=(\psi_0,\dots,\psi_N)$. This completes the inductive step and hence the proof.
    \end{proof}

    We will usually work in dimension $n=3$ and to be more compatible with the literature, we often replace differential forms by their vector proxies and the exterior derivative by gradient, curl and divergence. Following the notation in \cite{arnold2021complexes}, we introduce notation for the algebraic operations we need in this picture.
 {   Our notations for vector/matrix proxies and basic linear algebraic operations are summarized as follows:
\begin{table}[h!]
\begin{center}
\begin{tabular}{c|c}

$\mathbb V$ & $\mathbb R^n$ (also for a representation when there is no danger of confusion)\\
$\mathbb M$ &the space of all $n\times n$-matrices\\
$\mathbb S$ & symmetric matrices\\
$\mathbb K$ & skew symmetric matrices\\
$\mathbb T$ & trace-free matrices\\
$\skw: \M\to \K$ & skew symmetric part of a matrix\\
$\sym: \M\to \S$ & symmetric part of a matrix\\
$\tr:\M\to\R$ & matrix trace\\
$\iota: \R\to \M$  & the map $\iota u:= uI$ identifying a scalar with a scalar matrix\\
$\dev:\mathbb{M}\to \mathbb{T}$ & deviator (trace-free part of a matrix) given by $\dev u:=u-1/n \tr (u)I$\\

\end{tabular}
\end{center}
\end{table}}

Moreover, in three space dimensions, we can identify a skew symmetric matrix with a vector,
$$
 \mskw\left ( 
\begin{array}{c}
v_{1}\\ v_{2}\\ v_{3}
\end{array}
\right ):= \left ( 
\begin{array}{ccc}
0 & -v_{3} & v_{2} \\
v_{3} & 0 & -v_{1}\\
-v_{2} & v_{1} & 0
\end{array}
\right ).
$$
Consequently, we have   $\mskw(v)w = v\times w$ for $v,w\in\V$.  We also define
$\vskw=\mskw^{-1}\circ \skw: \M\to \V$. We further define the Hessian operator $\hess:=\grad\circ\grad$ and {for any matrix valued function $u$, $\mathcal{S}u:=u^{T}-\tr(u)I$.}  

{ 
In vector/matrix proxies, we often deal with (co)vector-valued differential forms. Our convention of notation is that each column of a matrix corresponds to a form. For example, let $w=w_{i}dx^{i}$ be a vector-valued 1-form, where $w_{i}$ is a vector for $i=1, 2, 3$. Then the three rows of the matrix proxy of $w$ are $w_{1}$, $w_{2}$ and $w_{3}$, respectively. This is consistent with the convention in \cite{arnold2021complexes}. With this convention, the proxies of exterior derivatives ($\grad$, $\curl$, $\div$, etc.) act column-wise, while the $K$ operators act row-wise, as they will be defined on the vector values of forms (see examples below).}
  
  \subsection{Conformal Hessian complex}\label{sec:conf-Hess}

  In the language of Section \ref{sec:struct}, we put $n=3$, $N=2$, $\mathbb V_0=\mathbb V_2=\mathbb R$ and $\mathbb V_1=\mathbb R^3$. So elements in $Z^{i,0}$ and $Z^{i,2}$ are just differential forms of degree $i$, which we denote by $\phi$ and $\omega$, respectively. An element of $Z^{i,1}$ can be written as a triple $(\psi_1,\psi_2,\psi_3)$ of $i$-forms. In this notation, we define $K^{i,1}(\psi_1,\psi_2,\psi_3):=\sum_\ell x^\ell\psi_\ell$, where $x^1,x^2,x^3$ are the coordinate functions on $\mathbb R^3$. Likewise, we define $K^{i,2}(\omega):=({x^1\omega,x^2\omega,x^3\omega})$. Defining $S^{i,j}$ by \eqref{DKKD}, we obtain
      \begin{align*}
        S^{i,1}(\psi_1,\psi_2,\psi_3)&=\textstyle\sum_\ell(d(x^\ell\psi_\ell)-x^\ell d\psi_\ell)=\sum_\ell dx^\ell\wedge\psi_\ell,\\
        S^{i,2}\omega&=(dx^1\wedge\omega,dx^2\wedge\omega,dx^3\wedge\omega).
      \end{align*}
      In particular, in the picture of functions, the $S^{i,j}$ can be written as compositions with linear maps $\partial^{i,j}$ as discussed in Section \ref{sec:struct}.   
      
      There is only one relevant instance of equation \eqref{SKKS}, namely $K^{i+1,1}S^{i,2}=S^{i,1}K^{i,2}$, and both sides evidently map $\omega$ to $\sum_\ell x^\ell dx^\ell\wedge\omega$. Hence we have made all the verifications needed to apply our machinery. Converting to vector proxies, the BGG diagram has the form 
 \begin{equation}\label{conformal-hess-3rows}
\begin{tikzcd}
0 \arrow{r} &H^{q_0}\otimes \mathbb{R}  \arrow{r}{\grad} &H^{q_0-1}\otimes \mathbb{V} \arrow{r}{\curl}\arrow[dl, "I", shift left=1] &H^{q_0-2}\otimes \mathbb{V} \arrow{r}{\div}\arrow[dl, "\mskw", shift left=1] & H^{q_0-3}\otimes \mathbb{R} \arrow{r}{}\arrow[dl, "1/3\iota", shift left=1] & 0\\
0 \arrow{r}&H^{q_1}\otimes \mathbb{V}\arrow{r}{\grad} \arrow[u, "\cdot x"]\arrow[ur, "I"]&H^{q_1-1}\otimes \mathbb{M}  \arrow{r}{\curl} \arrow[ur, "2\vskw"]\arrow[u, "\cdot x"]\arrow[dl, "1/3\tr", shift left=1]&H^{q_1-2}\otimes \mathbb{M} \arrow{r}{\div}\arrow[ur, "\tr"] \arrow[u, "\cdot x"]\arrow[dl, "-2\vskw", shift left=1]&\arrow[u, "\cdot x"] H^{q_1-3}\otimes \mathbb{V} \arrow{r}{} \arrow[dl, "I", shift left=1]& 0\\
0 \arrow{r} &H^{q_2}\otimes \mathbb{R}\arrow[u, " \otimes x"]\arrow{r}{\grad} \arrow[ur, "\iota"]&H^{q_2-1}\otimes \mathbb{V}  \arrow[u, " \otimes x"]\arrow{r}{\curl} \arrow[ur, "-\mskw"]\arrow[u, " \otimes x"]&H^{q_2-2}\otimes \mathbb{V}\arrow[u, " \otimes x"] \arrow{r}{\div}\arrow[ur, "I"] & H^{q_2-3}\otimes \mathbb{R} \arrow[u, " \otimes x "]\arrow{r}{} & 0.
 \end{tikzcd}
\end{equation}     
  
 Here the maps $\cdot x$ send {an $\Bbb R^3$-valued} function $f$ to the function $x\mapsto { f(x)\cdot x}$ while $\otimes x$ maps a real valued function $g$ to the function $x\mapsto g(x)x$. { Then such operations are extended in a row-wise manner. For example, if $u$ is a vector, then $ u\otimes x$ is a matrix whose $i$-th row is the vector $u^{i}x$.} The operators in the twisted complex in this picture then read as 
$$  
d_{V}^{0}=
   \left (
\begin{array}{ccc}
\grad & -I & 0\\ 
0 & \grad & -\iota\\
0 & 0 & \grad
\end{array}
\right ), \quad 
d_{V}^{1}=
   \left (
\begin{array}{ccc}
\curl & -2\vskw & 0\\ 
0 & \curl & \mskw\\
0 & 0 & \curl
\end{array}
\right ), \quad
d_{V}^{2}=
   \left (
\begin{array}{ccc}
\div & -\tr & 0\\ 
0 & \div & -I\\
0 & 0 & \div
\end{array}
\right ). 
$$

  To pass to the BGG complex, we first move to $\tilde Z^{i,j}$ as discussed in Section \ref{sec:struct}. Then we just have to understand the complements to the image of each $S$ operators in the kernel of the next $S$ operators, and this just happens point-wise. From \eqref{conformal-hess-3rows} we can immediately read off that all operators $S^{i,1}$ are surjective and $S^{0,1}$ is bijective, while all $S^{i,2}$ are injective and $S^{2,2}$ is bijective. This {immediately provides the spaces in the resulting complex, while the operators can be derived from formula \eqref{D-matrix} in the appendix, and we get}
 \begin{equation}\label{conformal-hess}
\begin{tikzcd}
0 \arrow{r} &H^{q}\otimes \mathbb{R}  \arrow{r}{\dev\hess} &H^{q-2}\otimes (\mathbb{S}\cap \mathbb{T}) \arrow{r}{ \sym\curl} &H^{q-3}\otimes (\mathbb{S}\cap \mathbb{T}) \arrow{r}{\div\div} & H^{q-5}\otimes \mathbb{R} \arrow{r}{} & 0,
 \end{tikzcd}
 \end{equation}
 which is known as the \textit{conformal Hessian complex}.

 \smallskip
 
 In the language of differential forms, we can also show that our construction is compatible with Euclidean motions in an appropriate sense. Let $f$ be a Euclidean motion, so $f(x)=Ax+b$ for an orthogonal matrix $A=(a^i_j)$ and a vector $b\in\Bbb R^3$. Then for a bounded Lipschitz domain $U$, also $f(U)\subset\mathbb R^3$ is a bounded Lipschitz domain, and we have the standard pullback operator on differential forms (of any regularity), which we denote by $f^*$. Now in the notation above, for forms $\phi\in Z^{i,0}(f(U))$ and $\omega\in Z^{i,2}(f(U))$ we use this standard pullback $f^*\phi\in Z^{i,0}(U)$ and $f^*\omega\in Z^{i,2}(U)$. On $Z^{i,1}$, we need a different version of the pullback. Namely, representing an element of $Z^{i,1}(f(U))$ as a triple $(\psi_1,\psi_2,\psi_3)$ we define
   $$
f^*(\psi_1,\psi_2,\psi_3):=(\textstyle{\sum}_ja_1^jf^*\psi_j,\textstyle{\sum}_ja_2^jf^*\psi_j,\textstyle{\sum}_ja_3^jf^*\psi_j). 
$$
This may look arbitrary, the conceptual explanation is that we should actually view elements of $Z^{i,1}$ as $i$-forms with values in the tangent bundle and this is the natural action of a diffeomorphism on such forms. See Section \ref{sec:background} for more details.

The standard compatibility of a pullback of differential forms with the exterior derivative together with the fact that the $a^i_j$ are constants, implies that the pullback we have defined commutes with the exterior derivatives $d^i$ on all spaces in question. Moreover, we can verify by a direct computation that it is also compatible with the operators $S^{i,j}$. By definition, $f(x)^i=\sum_ja^i_jx^j+b^i$, where the $b^j$ are the components of $b$, which implies that $f^*dx^i=\sum_ja^i_jdx^j$ for any $i$. Inserting this, one immediately computes that for $(\psi_1,\psi_2,\psi_3)\in Z^{i,1}(f(U))$ both $S^{i,1}(f^*(\psi_1,\psi_2,\psi_3))$ and $f^*(S^{i,1}(\psi_1,\psi_2,\psi_3))$ are given by $\sum_{j,\ell}{ a_j^\ell} dx^j\wedge f^*\psi_\ell$. On the other hand, for $\omega\in Z^{i,2}(f(U))$ the $k$th component of $S^{i,2}f^*\omega$ is given by $dx^k\wedge f^*\omega$, while for the $k$th component of $f^*(S^{i,2}\omega)$, we obtain
$$
\textstyle\sum_j a^j_k f^*(dx^j\wedge\omega)=\textstyle\sum_{j,\ell}a^j_ka^j_\ell dx^\ell\wedge f^*\omega. 
$$
But since $A^t=A^{-1}$ we get $\sum_ja^j_ka^j_\ell=\delta_{k,\ell}$, the Kronecker delta, so we again get $dx^k\wedge f^*\omega$.

 This readily implies that the pullback is compatible with $d^i_V$ for each $i$, so we obtain a pullback for the twisted complexes. Further the pullback preserves each of the spaces $\ker(S^{i,j})$ and $\ran(S^{i,j})$ as well as their orthocomplements. Hence the pullback is also compatible with the operators $T^{i,j}$ and there is an induced pullback on the spaces $\Upsilon^i$. This in turn implies that all the further operations we construct are compatible with the pullback and hence so are the BGG operators $D^i$. The pullbacks on the spaces $\Upsilon^i$ can be made explicit, they correspond to the natural pullback on tensor fields of appropriate type. In particular, if some Euclidean motion maps a bounded Lipschitz domain to itself then all our constructions are invariant under that motion.

 For more general maps, the compatibility need not hold. The corresponding phenomenon for BGG complexes related to projective differential geometry involves affine transformations instead of Euclidean motions. The failure of compatibility with more general maps was encountered in the context of the isogeometric analysis \cite{arf2021structure} for the Hessian complex. 

 \subsection{Conformal deformation complex}\label{sec:conf-def}

  In the language of Section \ref{sec:struct}, we put $n=3$, $N=2$, $\mathbb V_0=\mathbb R^3$, $\mathbb V_1:={ \mathbb R\oplus \mathfrak o(3)}$, and $\mathbb V_2=\mathbb R^{3*}$, {so in the language of Section \ref{sec:background} we use $\Bbb V=\mathfrak g$ (with a shift of the grading by $1$ for consistency).} The conceptual notation to use here is to write elements of $Z^{i,0}$ as column vectors of $i$-forms $\phi_\ell$, and elements of $Z^{i,2}$ as row vectors of $i$-forms $\omega_\ell$ for $\ell=1,2,3$. Elements of $Z^{i,1}$ in this convention are viewed as $3\times 3$-matrices of $i$-forms of the form ${(\psi,\psi_\ell)}:=\begin{pmatrix} \psi & \psi_3 & -\psi_2\\ -\psi_3 & \psi & \psi_1\\ \psi_2 & -\psi_1 & \psi \end{pmatrix}$. Next, $K^{i,1}$ is defined by acting with a matrix on the positions vector, i.e.
   $$
   K^{i,1}\begin{pmatrix} \psi & \psi_3 & -\psi_2\\ -\psi_3 & \psi & \psi_1\\ \psi_2 & -\psi_1 & \psi \end{pmatrix}=\begin{pmatrix} x^1\psi + x^2\psi_3 -x^3\psi_2\\ -x^1\psi_3+x^2\psi+x^3\psi_1\\ x^1\psi_2-x^2\psi_1+x^3\psi \end{pmatrix}.$$
   As for the conformal Hessian complex, this readily leads to
   $$
   S^{i,1}\begin{pmatrix} \psi & \psi_3 & -\psi_2\\ -\psi_3 & \psi & \psi_1\\ \psi_2 & -\psi_1 & \psi \end{pmatrix}=\begin{pmatrix} dx^1\wedge\psi + dx^2\wedge\psi_3 -dx^3\wedge\psi_2\\ -dx^1\wedge\psi_3+dx^2\wedge\psi+dx^3\wedge\psi_1\\ dx^1\wedge\psi_2-dx^2\wedge\psi_1+dx^3\wedge\psi \end{pmatrix}.
   $$
   The operator $K^{i,2}$ can be  obtained from a Lie bracket with a position vector, see Section \ref{sec:background}, but one can simply start with an explicit formula:
   $$
   K^{i,2}(\omega_1,\omega_2,\omega_3):=\begin{pmatrix}\textstyle\sum_\ell x^\ell\omega_\ell & x^1\omega_2 -x^2\omega_1& x^1\omega_{3}-x^3\omega_1\\ x^2\omega_1-x^1\omega_2 &  \textstyle\sum_\ell x^\ell\omega_\ell & x^2\omega_3-x^3\omega_2 \\ x^3\omega_1-x^1\omega_3 & x^3\omega_2-x^2\omega_3 & \textstyle\sum_\ell x^\ell\omega_\ell \end{pmatrix}, 
   $$
   and this readily gives
   $$
S^{i,2}(\omega_1,\omega_2,\omega_3):=\begin{pmatrix}\textstyle\sum_\ell dx^\ell\wedge\omega_\ell & dx^1\wedge\omega_2-dx^2\wedge\omega_1 & dx^1\wedge\omega_{3}-dx^3\wedge\omega_1\\ dx^2\wedge\omega_1-dx^1\wedge\omega_2 &  \textstyle\sum_\ell dx^\ell\wedge\omega_\ell & dx^2\wedge\omega_3-dx^3\wedge\omega_2 \\ dx^3\wedge\omega_1-dx^1\wedge\omega_3 & dx^3\wedge\omega_2-dx^2\wedge\omega_3 & \textstyle\sum_\ell dx^\ell\wedge\omega_\ell \end{pmatrix}.
$$
Now as for the conformal Hessian complex, there is just one instance of \eqref{SKKS} that needs to be {checked}, namely $K^{i+1,1}\circ S^{i,2}=S^{i,1}\circ K^{i,2}$. This is verified by a simple direct computation: The first rows of $K^{i+1,1}(S^{i,2}(\omega_1,\omega_2,\omega_3))$ and of $S^{i,1}(K^{i,2}(\omega_1,\omega_2,\omega_3))$ are, respectively, given by 
\begin{gather*}
  \textstyle\sum_\ell x^1dx^\ell\wedge\omega_\ell-x^2dx^2\wedge\omega_1+x^2dx^1\wedge\omega_2-x^3dx^3\wedge\omega_1+x^3dx^1\wedge\omega_3\\
  +\textstyle\sum_\ell x^\ell dx^1\wedge\omega_\ell-x^2dx^2\wedge\omega_1+x^1dx^2\wedge\omega_2-x^3dx^3\wedge\omega_1+x^1dx^3\wedge\omega_3, 
\end{gather*}
so these agree. Similarly, one verifies that the other rows agree. Converting to vector proxies, we obtain the following BGG diagram 
 \begin{equation}\label{conformal-3rows}
\begin{tikzcd}
0 \arrow{r}&H^{q_0}\otimes \mathbb{V}\arrow{r}{\grad} &H^{q_0-1}\otimes \mathbb{M}  \arrow{r}{\curl} &H^{q_0-2}\otimes \mathbb{M} \arrow{r}{\div} & H^{q_0-3}\otimes \mathbb{V} \arrow{r}{} & 0\\
0 \arrow{r} &H^{q_1}\otimes (\mathbb{R}\oplus\mathbb{V})\arrow{r}{\grad}  \arrow[u, "{K^{0,1}}"] \arrow[ur, "S^{0, 1}"] &H^{q_1-1}\otimes (\mathbb{V}\oplus \mathbb{M})  \arrow{r}{\curl} \arrow[u, "{K^{1,1}}"] \arrow[ur, "S^{1, 1}"]&H^{q_1-2}\otimes (\mathbb{V}\oplus \mathbb{M}) \arrow{r}{\div}\arrow[u, "{K^{2,1}}"]\arrow[ur, "S^{2, 1}"] & H^{q_1-3}\otimes (\mathbb{R}\oplus \mathbb{V}) \arrow[u, "{K^{3,1}}"]\arrow{r}{} & 0\\
0 \arrow{r}&H^{q_2}\otimes \mathbb{V}\arrow{r}{\grad} \arrow[u, "{K^{0,2}}"]\arrow[ur, "S^{0, 2}"]&H^{q_2-1}\otimes \mathbb{M}  \arrow{r}{\curl} \arrow[u, "{K^{1,2}}"]\arrow[ur, "S^{1, 2}"]&H^{q_2-2}\otimes \mathbb{M} \arrow{r}{\div}\arrow[u, "{K^{2,2}}"]\arrow[ur, "S^{2, 2}"] & H^{q_2-3}\otimes \mathbb{V} \arrow[u, "{K^{3,2}}"]\arrow{r}{} & 0.
 \end{tikzcd}
\end{equation}
 The $S$ and $K$ operators can  alternatively be obtained from the   diagram \cite[(32)]{arnold2021complexes} where we have maps between scalar and vector-valued de Rham complexes.
Then the corresponding operators in \eqref{conformal-3rows} is a combination of them.  Specifically, $S^{0, 1}=(\iota, -\mskw)^{t}$, i.e., $S^{0, 1}(w, v)=wI-\mskw v$. Similarly, $S^{1, 1}=(-\mskw, \mathcal{S})^{t}$, $S^{2, 1}=(I, 2\vskw)^{t}$, $S^{0, 2}=(I, -\mskw)$, $S^{1, 2}=(2\vskw,  \mathcal{S})$, $S^{2, 2}=(\tr, 2\vskw)$. The $K$ operators are defined by $K^{1, i}=(\otimes x, \wedge x)^{t}$, $K^{2, i}=(\cdot x, \wedge x)$.

  To understand the form of the resulting BGG complex, we observe that the definitions easily imply that the linear maps $\partial^{i,j}$ that underlie $S^{0,1}$, $S^{0,2}$ and $S^{1,2}$ are injective while those for $S^{1,1}$, $S^{2,1}$ and $S^{2,2}$ are surjective. Since $\partial^{1,1}\circ\partial^{0,2}=0$, the dimensions of the spaces in question imply that $\ker(\partial^{1,1})=\ran(\partial^{0,2})$ so the corresponding statement for the $S$-operators follows. In the same way, we conclude that $\ker(S^{2,1})=\ran(S^{1,2})$. This implies that non-zero harmonic parts {$\Upsilon^{i,j}$ in \eqref{algebraic-hodge}} only occur for $(i,j)=(0,0)$, $(1,0)$, $(2,2)$, and $(3,2)$ and {using formula \eqref{D-matrix} from the appendix}, we conclude that the resulting BGG complex is
 \begin{equation}
\begin{tikzcd}
0 \arrow{r}&H^{q}\otimes \mathbb{V}\arrow{r}{\dev\deff} &H^{q-1}\otimes (\mathbb{S}\cap \mathbb{T})  \arrow{r}{\cot} &H^{q-4}\otimes (\mathbb{S}\cap \mathbb{T}) \arrow{r}{\div} & H^{q-5}\otimes \mathbb{V} \arrow{r}{} & 0.
 \end{tikzcd}
\end{equation}
 Here $\dev\deff=\dev\sym\grad$ is the symmetric trace-free part of the gradient, and
 $\cot:={\curl  \mathcal{S}^{-1}\curl  \mathcal{S}^{-1}\curl}$ leads to the linearized Cotton-York tensor with modified trace.  Compatibility of the construction with Euclidean motions can be obtained similarly as for the conformal Hessian complex. However, this time the action of a motion mixes components in all three rows. We do not go into details of this here.

\subsection{Higher order generalizations of the Hessian complex}\label{sec:higher-Hess}
The next example deals with an arbitrary number of rows. It should be mentioned that the setup here is unusually simple, in general it is very hard to treat complexes in higher orders simultaneously. Indeed, the Hessian complex is the simplest example of a BGG complex (beyond the de Rham complex). For simplicity, we again restrict to the case $n=3$, but we take an arbitrary number $N\in\mathbb N$ and define $\mathbb V_j$ to be the symmetric power $\Symm^j\mathbb R^{3*}$ for $j=0,\dots, N$, so in particular $\mathbb V_0=\mathbb R$ and $\mathbb V_1=\mathbb R^{3*}$ (the definition for symmetric powers can be found in, e.g., \cite{fulton2013representation}). {Elements of $Z^{i,0}$ will be viewed as $i$-forms, while for $i>0$, we view an} element of $Z^{i,j}$ as a family $\phi_{k_1\dots k_j}$ of differential forms of degree $i$ indexed by $j$-numbers $k_1,\dots,k_j\in\{1,2,3\}$ which are completely symmetric, i.e.~$\phi_{k_1\dots k_j}=\phi_{k_{\sigma_1}\dots k_{\sigma_j}}$ for any permutation $\sigma$ of $j$ elements.  { In other words, the $(i, j)$-th space in the BGG diagram is $\Lambda^{j} \mathbb R^{3*}\otimes \Symm^{i}\mathbb R^{3*}$ (here we count $i$ and $j$ from zero).}

  In this language, we define $K^{i,j}$ for $j\geq 1$ by
    $$
    K^{i,j}(\phi)_{k_1,\dots,k_{j-1}}:=\textstyle\sum_\ell x^\ell\phi_{\ell k_1\dots k_{j-1}},
    $$
    so symmetry of the family $\phi_{k_1\dots k_j}$ implies symmetry of $K^{i,j}(\phi)_{k_1\dots k_{j-1}}$. As before, this immediately implies that
    $$
    S^{i,j}(\phi)_{k_1,\dots,k_{j-1}}:=\textstyle\sum_\ell dx^\ell\wedge\phi_{\ell k_1\dots k_{j-1}}. 
    $$
    Hence we conclude that $S^{i,j}$ is indeed induced by composition with a linear map $\partial^{i,j}:\Lambda^i\mathbb R^{3*}\otimes \Symm^j\mathbb R^{3*}\to \Lambda^{i+1}\mathbb R^{3*}\otimes \Symm^{j-1}\mathbb R^{3*}$. To verify \eqref{SKKS} we observe that for $j\geq 2$, we get 
    $$
    S^{i,j-1}(K^{i,j}(\phi))_{k_1\dots k_{j-2}}=\textstyle\sum_{\ell,m}x^\ell dx^m\wedge\phi_{m\ell k_1\dots k_{j-2}}, 
    $$
    while $K^{i+1,j-1}(S^{i,j}(\phi))_{k_1\dots k_{j-2}}$ is given by the same expression with $\phi_{\ell m k_1\dots k_{j-2}}$ instead. Hence \eqref{SKKS} holds by symmetry of the indices of $\phi$. Thus we have completed all verifications needed to get the machinery going, and it remains to understand what the resulting BGG complex looks like.
    
We first observe that $\partial^{0,1}:\mathbb R^{3*}\to\mathbb R^{3*}$ is the identity, so in the case $q_j=q-j$, which is relevant for step 2, $S^{0,1}$ is the identity, too. In the next ``diagonal'' we get 
    $$
    \Symm^2\mathbb R^{3*}\overset{\partial^{0,2}}{\longrightarrow} \mathbb R^{3*}\otimes\mathbb R^{3*}
    \overset{\partial^{1,1}}{\longrightarrow} \Lambda^2\mathbb R^{3*},
    $$
    and $\partial^{0,2}$ is just the inclusion, while $\partial^{1,1}$ is the alternation. So we conclude that $S^{0,2}$ is injective, $\ran(S^{0,2})=\ker(S^{1,1})$ and $S^{1,1}$ is surjective. The next diagonals look uniform for {$j=3,\dots,N$}. One always gets
    $$
    \Symm^j\mathbb R^{3*}\overset{\partial^{0,j}}{\longrightarrow} \mathbb R^{3*}\otimes \Symm^{j-1}\mathbb R^{3*}
    \overset{\partial^{1,j-1}}{\longrightarrow} \Lambda^2\mathbb R^{3*}\otimes \Symm^{j-2}\mathbb R^{3*}
    \overset{\partial^{2,j-2}}{\longrightarrow} \Lambda^3\mathbb R^{3*}\otimes \Symm^{j-3}\mathbb R^{3*}. 
    $$
    Here $\partial^{0,j}$ is the inclusion while the following maps are induced by alternations {in the first $i+1$ arguments}.   The first space in this sequence can be interpreted as homogeneous polynomials of degree $j$ on $\Bbb R^n$. Next, we have one-forms, whose coefficients are homogeneous polynomials of degree $j-1$ and then two-forms with coefficients that are homogenous of degree $j-2$. The last space consists of three-forms, whose coefficients are homogeneous polynomials of degree $j-3$. {Moreover, the maps $\partial^{i,j}$ are obtained by restricting the exterior derivative to polynomial forms, whence} this is sometimes called a \textit{polynomial de Rham complex} at degree $j$. It  is well known (see \cite{Arnold.D;Falk.R;Winther.R.2006a} or Section 6.1 of \cite{Seiler:Involution}) that this is exact for $j\geq 3$ with $\partial^{0,j}$ injective and $\partial^{2,j-2}$ surjective. 
    Hence for $j=3,\dots,N$,  $S^{0,j}$ is injective, $\ker(S^{i,j-i})=\ran(S^{i-1,j-i+1})$ for $i=1,2$ and $S^{2,j-2}$ is surjective. Indeed, this also deals with the remaining diagonals, namely
    \begin{gather*}
      \mathbb R^{3*}\otimes \Symm^N\mathbb R^{3*} \overset{\partial^{1,N}}{\longrightarrow} \Lambda^2\mathbb R^{3*}\otimes \Symm^{N-1}\mathbb R^{3*} \overset{\partial^{2,N-1}}{\longrightarrow} \Lambda^3\mathbb R^{3*}\otimes \Symm^{N-2}\mathbb R^{3*} \\
     \Lambda^2\mathbb R^{3*}\otimes \Symm^N\mathbb R^{3*} \overset{\partial^{2,N}}{\longrightarrow} \Lambda^3\mathbb R^{3*}\otimes \Symm^{N-1}\mathbb R^{3*}. 
    \end{gather*} 
    They both have nontrivial cohomology only in {the first position}, and this cohomology is {$\Symm^{N+1}\mathbb R^{3*}$} for the first sequence. For the second sequence, the cohomology is just the kernel $W$ of the alternation over the first three indices in $\Lambda^2\mathbb R^{3*}\otimes \Symm^N\mathbb R^{3*}$. Since we obviously have cohomology $\mathbb R$ in degree $(0,0)$ and $\Lambda^3\mathbb R^{3*}\otimes \Symm^N\mathbb R^{3*}$ in degree $(3,N)$, we conclude that the BGG sequence gets the form
      \begin{equation}
\begin{tikzcd}
0 \arrow{r}{} &H^{q}\arrow{r}{D^0} &H^{q-N-1}\otimes (\Symm^{N+1}\mathbb R^{3*}) \arrow{r}{D^1} &H^{q-N-2}\otimes W \arrow{r}{D^2} & H^{q-N-3}\otimes \Symm^N\mathbb R^{3*}\arrow{r}{} & 0.
 \end{tikzcd}
   \end{equation}
  Here $D^0$ is just the $(N+1)$-fold derivative of a function, while the other two operators are of first order.

\section{Algebraic and geometric background}\label{sec:background}

Here we want to briefly sketch the relation of our constructions and of the examples discussed in Section \ref{sec:examples}  to the notions of BGG resolutions in geometry as developed in \cite{vcap2001bernstein}. The motivation for the construction and the basic input for the examples comes from representation theory. Representation theory in particular provides the maps $\partial^{i,j}$ used in Section \ref{sec:struct} and hence the operators $S^{i,j}$. It also provides similar maps inducing the operators ${T^{i,j}}$, but these are not {needed} here. The geometric version works in the general setting of parabolic subalgebras in semi-simple Lie algebras, but we focus on two special cases here. In both these cases, one deals with a real simple Lie algebra $\frak g$, which admits a Lie algebra grading of the form $\frak g=\frak g_{-1}\oplus\frak g_0\oplus\frak g_1$. This means that $\frak g_{-1}$ and $\frak g_1$ are abelian subalgebras (so they are just vector spaces) and $\frak g_0$ is a Lie subalgebra of $\frak g$. Via the bracket, $\frak g_0$ acts on $\frak g_{-1}$ and $\frak g_1$, and general results imply that these two representations are always dual to each other.

The first case originates in projective differential geometry and relates for example to the elasticity complex, see Section \ref{sec:Cosserat} below, and to the higher order analogs of the Hessian complex discussed in Section \ref{sec:higher-Hess}. Here $\frak g=\frak{sl}(n+1,\Bbb R)$, {the Lie algebra of trace-free $(n+1)\times(n+1)$-matrices}, $\frak g_0=\frak{gl}(n,\Bbb R)$, $\frak g_{-1}\cong\Bbb R^n$ and $\frak g_1\cong\Bbb R^{n*}$.
  The decomposition of $\frak g$ can be simply realized by decomposing {(trace-free)} matrices of size $(n+1)\x (n+1)$ into blocks of sizes $1$ and $n$ and viewing elements of $\Bbb R^n$ as column vectors and elements of $\Bbb R^{n*}$ as row vectors. Explicitly, denoting by $\Bbb I$ the $n\x n$ unit matrix, the matrix corresponding to $v\in\Bbb R^n=\frak g_{-1}$,  $B\in\frak{gl}(n,\Bbb R)=\frak g_0$, and $\lambda\in\Bbb R^{n*}=\frak g_1$ is given by
  \begin{equation}\label{eq:proj-matrix}
  \begin{pmatrix} -\tfrac{1}{n+1}\tr(B) & \lambda\\ v & B-\tfrac{1}{n+1}\tr(B)\Bbb   I \end{pmatrix}.
  \end{equation}
  {Observe that this matrix is always trace-free.} The non-obvious choice for the identification of the block diagonal part is motivated by the fact that
  $$
  \left[\begin{pmatrix}  -\tfrac{1}{n+1}\tr(B) & 0 \\ 0 & B-\tfrac{1}{n+1}\tr(B)\Bbb   I \end{pmatrix}, \begin{pmatrix} 0 & \lambda \\ v & 0 \end{pmatrix}\right]=\begin{pmatrix} 0 & -\lambda B \\ Bv & 0 \end{pmatrix}.
  $$
  This shows that the action of $\frak g_0$ on $\frak g_{\pm 1}$ via the commutator (which is the Lie bracket in $\frak g$) coincides with the standard action on $\Bbb R^n$ and $\Bbb R^{n*}$.  

  The second case relates to conformal geometry and is the basis for the examples discussed in Sections \ref{sec:conf-Hess} and \ref{sec:conf-def}. Here $\frak g=\frak{so}(n+1,1)$, the Lie algebra of linear maps that are skew symmetric with respect to a Lorentzian inner product $b$ on $\Bbb R^{n+2}$ and $\frak g_0=\frak{co}(n)$, the Lie algebra of $n\x n$-matrices spanned by skew symmetric matrices and multiples of the identity. As above, $\frak g_{-1}\cong\Bbb R^n$ and $\frak g_1\cong\Bbb R^{n*}$ are just the standard representation of $\frak g_0$ and its dual. {To obtain an explicit matrix representation, one uses a basis $v_0,\dots, v_{n+1}$ of $\Bbb R^{n+2}$ such that $b(v_0,v_{n+1})=b(v_{n+1},v_0)=1$ and $b(v_i,v_j)=\delta_{ij}$ for $i,j\in\{1,\dots,n\}$ and all other values of $b$ on basis vectors are zero. In such a basis $\frak g$ can be realized as all matrices of size $(n+2)\times (n+2)$ which have a block form
    $$
\begin{pmatrix} a & Z & 0\\ X & A & -Z^t\\ 0 & -X^t & -a\end{pmatrix}
  $$
  with block sizes $(1,n,1)$ and $A^t=-A$. Here the grading comes from the block form, i.e.\ $X$ corresponds to $\frak g_{-1}$, $a$ and $A$ correspond to $\frak g_0$ and $Z$ corresponds to $\frak g_1$, see Section 1.6.3 of \cite{parabolbook} for more details.}

Now the starting point for the BGG construction is a representation $\Bbb V$ of $\frak g$, which can be assumed to be irreducible. By restriction, $\Bbb V$ becomes a representation of $\frak g_0$, but this is not irreducible anymore. One also obtains actions of $\frak g_{\pm 1}$ on $\Bbb V$ and one can decompose $\Bbb V$ into a direct sum $\Bbb V=\Bbb V_0\oplus\dots\oplus\Bbb V_N$ for some $N\geq 0$ in such a way that the action of $\frak g_1$ maps each $\Bbb V_i$ to $\Bbb V_{i+1}$ while the action of $\frak g_{-1}$ maps each $\Bbb V_i$ to $\Bbb V_{i-1}$. (Here we use the convention that $\Bbb V_i=\{0\}$ if $i<0$ or $i>N$.)  The Lie algebra $\frak g$ is semi-simple and the resulting algebra $\frak g_0$ turns out to be reductive. Hence representations of both algebras can be understood in terms of so-called highest weights. In this language, there is also a scheme for describing the possible first operators in a BGG sequence and the choice of $\Bbb V$ that is needed to produce a chosen operator from that list, see \cite{BCEG}. This is beyond the scope of the current article, however.  

 For example, corresponding to the block decomposition of $\frak{sl}(n+1,\Bbb R)$ from above, we have to decompose the standard representation $\Bbb R^{n+1}$ of $\frak g$ into the top component of a column vector in $\Bbb R^{n+1}$ and the remaining vector in $\Bbb R^n$. Restricting to the case of trace-free matrices in $\frak g_0$, we see that
 $$
\begin{pmatrix} 0 & \lambda \\ v & B
\end{pmatrix}\begin{pmatrix} a \\ w
\end{pmatrix} = \begin{pmatrix} \lambda(w) \\ Bw+av
\end{pmatrix}
  $$
  This shows that $\Bbb V_0=\Bbb R^n$ and $\Bbb V_1=\Bbb R$ as representations of $\frak{sl}(n,\Bbb R)\subset\frak g_0$, the action $\frak g_{-1}\x \Bbb V_1\to \Bbb V_0$ sends $(v,a)$ to $av$ and the action $\frak g_1\x\Bbb V_0\to\Bbb V_1$ is just the dual pairing $\Bbb R^{n*}\x\Bbb R^n\to\Bbb R$. Similarly, putting $\Bbb V=\Bbb R^{(n+1)*}$, the dual of the standard representation, we get $\Bbb V_0=\Bbb R$ and $\Bbb V_1=\Bbb R^{n*}$ with obvious actions of $\frak g_{\pm 1}$. This choice of $\Bbb V$ leads to the Hessian complex. For the higher order analogs discussed in Section \ref{sec:higher-Hess} one has to use $\Bbb V:=\odot^N\Bbb R^{(n+1)*}$, which accordingly decomposes as indicated there. 

 Returning to a general representation $\Bbb V$, the action of $\frak g_1$ can be interpreted as defining a linear map $\frak g_1\otimes\Bbb V\to\Bbb V$, which maps $\frak g_1\otimes\Bbb V_i$ to $\Bbb V_{i+1}$ for each $i$. It is a purely algebraic fact that this extends to a sequence of linear maps $\Lambda^k\frak g_1\otimes\Bbb V\to\Lambda^{k-1}\frak g_1\otimes \Bbb V$, which also send $\Lambda^k\frak g_1\otimes\Bbb V_i$ to $\Lambda^{k-1}\frak g_1\otimes \mathbb V_{i+1}$.
These are Lie algebra homology differentials, but for historical reasons they are often referred to as the \textit{Kostant codifferential} and denoted by $\partial^*$. They satisfy $\partial^*\circ\partial^*=0$ and hence define a complex, which computes the Lie algebra homology of $\frak g_1$ with coefficients in $\Bbb V$. 

For the action of $\frak g_{-1}$, one uses a different but equivalent encoding.
 Rather than as a map $\frak g_{-1}\x\Bbb V\to \Bbb V$, we view the action as defining a map $\Bbb V\to L(\frak g_{-1},\Bbb V)$. Namely, we send an element $v\in\Bbb V$ to the map $\frak g_{-1}\to \Bbb V$, $X\mapsto X\cdot v$. A general algebraic construction extends this to a map $\partial$ that sends $k$-linear alternating maps $(\frak g_{-1})^k\to\Bbb V$ to $(k+1)$-linear alternating maps $(\frak g_{-1})^{k+1}\to\Bbb V$. This is the \textit{Lie algebra cohomology differential}, which in the case of the abelian Lie algebra $\frak g_{-1}$ is explicitly given by
  $$
\partial \phi(X_0, \cdots, X_k):=\textstyle\sum_{i=0}^k(-1)^{i}X_i\cdot \phi(X_{0}, \cdots, \widehat{X_{i}}, \cdots, X_k),
  $$
with the hat denoting omission and the dot denoting the action of $\frak g_{-1}$ on $\Bbb V$. The construction via a representation of $\frak g$ ensures that this is equivariant for the action of $\frak g_0$ and it is easily seen to satisfy $\partial\circ\partial=0$. Hence it defines a complex that computes the Lie algebra cohomology of $\frak g_{-1}$ with coefficients in $\Bbb V$. Using the duality between $\frak g_{-1}$ and $\frak g_1$, the space of $k$-linear maps from above can again be interpreted as $\Lambda^k\frak g_1\otimes\Bbb V$. Thus we can interpret both $\partial^*$ and $\partial$ as acting on $\Lambda^\bs \frak g_1\otimes \Bbb V$, one lowering and one raising the degree in the exterior algebra, and both preserving the total degree (which is $k+i$ on $\Lambda^k\frak g_1\otimes\Bbb V_i$).

The whole setup was introduced (in the setting of general parabolic subalgebras) in B.\ Kostant's work \cite{Kostant}, where it was next shown that $\partial$ and $\partial^*$ are adjoint with respect to an appropriate inner product on $\Lambda^*\frak g_1\otimes \Bbb V$. Defining the \textit{Kostant Laplacian} $\square:=\partial^*\circ\partial+\partial\circ\partial^*$ one obtains a map that sends each $\Lambda^k\frak g_1\otimes\Bbb V$ to itself. The constructions easily imply that both $\partial$ and $\partial^*$ are $\frak g_0$-equivariant, and one obtains an algebraic Hodge decomposition
\begin{equation}\label{Hodge}
\Lambda^k\frak g_1\otimes\Bbb V=\Cal R(\partial)\oplus\Cal N(\square)\oplus\Cal R(\partial^*)
\end{equation}
into $\frak g_0$-invariant subspaces for each $k$. This has the property that the first two summands add up to $\ker(\partial)$ while the last two summands add up to $\ker(\partial^*)$. This easily implies that $\partial$ restricts to an isomorphism between the $\Cal R(\partial^*)$-component in $\Lambda^k\frak g_1\otimes\Bbb V$ and the $\Cal R(\partial)$-component in $\Lambda^{k+1}\frak g_1\otimes\Bbb V$, and $\partial^*$ restricts to an isomorphism in the opposite direction. The next step in \cite{Kostant} is {showing} that the map $\square$ can be nicely interpreted in terms of representation theory. This leads to an explicit description of $\ker(\square)$ and hence of the Lie algebra (co)homology spaces described above in representation theory terms, which is known as \textit{Kostant's version of the Bott-Borel-Weyl theorem}. 

\smallskip

This setup can be translated to geometry, more precisely to projective differential geometry in dimension $n\geq 2$ in the case that $\frak g=\frak{sl}(n+1,\Bbb R)$ and to conformal differential geometry of dimension $n\geq 3$ for $\frak g=\frak{so}(n+1,1)$. {The point about this is that representations of $\frak{gl}(n,\Bbb R)$ are closely related to natural vector bundles on smooth manifolds of dimension $n$. Similarly, vector representations of $\frak{co}(n)$ are closely related to natural vector bundles on $n$-dimensional manifolds endowed with a Riemannian metric. (Indeed, conformal equivalence class of such metrics suffices.) Formally, one needs representations of the groups $GL(n,\Bbb R)$ respectively $CO(n)$ here, which needs some care in the case of general representations $\Bbb V$. In both cases, the standard representation $\Bbb R^n\cong\frak g_{-1}$ corresponds to the tangent bundle $TM$ and its dual $\Bbb R^{n*}\cong\frak g_1$ corresponds to the cotangent bundle $T^*M$. For all examples discussed in this article, the representations in question can be obtained from these two representations via constructions that can also be applied to vector bundles, which leads to appropriate interpretations.} Moreover, $\frak g_0$-equivariant linear maps between representations induce natural bundle maps between the corresponding natural vector bundles. 

We only describe the next steps in the simpler setting of a manifold $M$ endowed with a linear connection $\nabla$ on $TM$ respectively with a Riemannian metric for which $\nabla$ is the Levi-Civita connection. Then there is an induced connection on any of the natural vector bundles described above and we will denote all these connections by $\nabla$. Given a representation $\Bbb V$ of $\frak g$ as above, we can simply view it as a representation of $\frak g_0$ and then consider the corresponding natural bundle on some manifold $M$, which we denote by $\Cal VM\to M$. (To deal with the setting of projective or conformal differential geometry, a different interpretation is needed, but we don't go into these aspects here.) The representations $\Lambda^k\frak g_1\otimes\Bbb V$ then correspond to the bundles $\Lambda^kT^*M\otimes\Cal VM$ whose sections are $\mathcal VM$-valued $k$-forms. Hence the maps $\partial$ and $\partial^*$ from above give rise to natural bundle maps on these bundles of differential forms. On the other hand, the connection $\nabla$ on $\Cal VM$ can be coupled to the exterior derivative on differential forms to obtain the \textit{covariant exterior derivative} on $\Cal VM$-valued differential forms.

Motivated by ideas from projective and conformal differential geometry (which generalize to so-called parabolic geometries) one can use the connection $\nabla$, the bundle map $\partial$ and some components of the curvature of $\nabla$ to define a new connection $\nabla^{\Cal V}$ on the bundle $\Cal VM$. This can be arranged in such a way that $\nabla^{\Cal V}$ is flat, even if the given connection $\nabla$ on $TM$ is only projectively flat respectively the given Riemannian metric is only conformally flat. {The connection $\nabla^{\Cal V}$} again can be coupled to the exterior derivative to obtain an operation on $\Cal VM$-valued forms that defines a complex generalizing the de Rham complex. Together with the bundle maps induced by $\partial^*$, this operation can be used to obtain a BGG complex of higher order natural differential operators from this twisted de Rham complex.

\smallskip

Things simplify considerably in the setting we use in this article. We restrict to the case of the flat connection, respectively the flat metric, on $\Bbb R^n$ or an open subset $U\subset\Bbb R^n$ and do not require projective or conformal invariance. This allows us to ignore the maps $\partial^*$ and focus on the maps $\partial$ (which can be modified as long as the kernel and the image remain unchanged). The map $\partial^{i,j}$ used in Section \ref{sec:struct} is then the restriction of $\partial$ to $\Lambda^i\Bbb R^{n*}\otimes \mathbb V_j$, which by construction has values in $\Lambda^{i+1}\Bbb R^{n*}\otimes \mathbb V_{j-1}$. The algebraic Hodge decomposition \eqref{Hodge} gets replaced by \eqref{algebraic-hodge} and we can define the operators $T$ as partial inverses of $S$. Since the tangent bundle of $\Bbb R^n$ is trivialized by sections that are parallel for $\nabla$, the same holds for any of the natural bundles described above. Hence $\Cal VU$ can be identified with $U\x\Bbb V$ and hence the space of $\Cal VU$-valued forms can be identified with $\Omega^k(U)\otimes\Bbb V$ {(and further with functions $U\to\Lambda^k\Bbb R^{n*}\otimes\Bbb V$)}. Under this identification, the covariant exterior derivative associated to $\nabla$ simply becomes $d\otimes\operatorname{id}_{\Bbb V}$. Further, as a representation of $\frak g_0$, we have $\Bbb V=\Bbb V_0\oplus\dots\oplus\Bbb V_N$, which gives the splitting into rows that we use. Since $\nabla$ has trivial curvatures, we only need the operators $S$ to construct $\nabla^{\Cal V}$ and the corresponding covariant exterior derivative $d_V$. 

We use the convention that $\nabla^{\Cal V}=\nabla-S$ and then $d_V=d-S$. The fact that $\nabla^{\Cal V}$ is flat is then equivalent to $d_V\circ d_V=0$ and hence to \eqref{DSSD} and \eqref{SS}. Flatness of the connection $\nabla^{\Cal V}$ implies that the bundle $\Cal V\Bbb R^n$ can be globally trivialized by sections that are parallel for $\nabla^{\Cal V}$. This trivialization is the basis for the isomorphism in Theorem \ref{thm:input-twisted}. In the special case of $\Bbb R^n$, there is a neat description of this isomorphism using the operators we call $K$ in \eqref{SKKS}. This description has no analog in the general theory and is not projectively or conformally invariant.
 
\section{Conformal Korn inequalities in 2D}

In this section, we use $\|u\|:=(\int_{U}u^{2}\,dx)^{\frac{1}{2}}$ and $\|u\|_{1}:=(\|u\|^{2}+\|\nabla u\|^{2})^{\frac{1}{2}}$ to denote the $L^{2}$ and $H^{1}$ norms, respectively, {and similarly for higher Sobolev norms.} The (second) Korn inequality refers to
$$
\|u\|_{1}\leq C\|\deff u\|, 
$$
{ which holds for all $u\in H^{1}(\Omega)\otimes \mathbb{R}^{n}$ such that $\int_{\Omega}u\cdot q\, dx=0, ~\forall q\in \mathrm{RM}$.} Here $C$ is a constant not depending on $u$, and $\mathrm{RM}:=\ker(\deff)$ is the finite dimensional space of Killing vector fields, i.e.\ infinitesimal rigid body motions  (see, e.g., \cite{ciarlet2013linear}). The orthogonality condition against $\mathrm{RM}$ is to exclude the kernel of $\deff$.  
There are other ways to fix the kernel. For example, one has the first Korn inequality with boundary conditions.

The following generalization of the Korn inequality, sometimes referred to as the conformal (trace-free) Korn inequality, holds
\begin{equation}\label{conformal-korn-ineq}
\|u\|_{1}\leq C\|\dev\deff u\|, \quad\forall u\in H^{1}(\Omega)\otimes \mathbb{R}^{n}, ~n\geq 3, ~ \int_{\Omega}u\cdot q=0, ~\forall q\in \ker(\dev\deff),
\end{equation}
where $\ker(\dev\deff)$ is the space of conformal Killing vectors,  i.e.\ infinitesimal conformal motions. The trace-free Korn inequality has various applications in, e.g.,  general relativity \cite{dain2006generalized} and continuum theory with microstructures \cite{jeong2010existence}.  The trace-free Korn inequality only holds in $n$D for $n\geq 3$. The failure of this inequality in 2D is discussed in \cite{dain2006generalized}. 

The framework in \cite{arnold2021complexes} provides a systematic cohomological approach for establishing Poincar\'e type inequalities. The key  is that the differential operators have closed range, which is further because the cohomology has finite dimension. In this framework, the trace-free Korn inequality \eqref{conformal-korn-ineq} holds in 3D because the conformal deformation complex has finite dimensional cohomology on Lipschitz domains, {see  \cite{arnold2021complexes}}. {Of course, this follows from the results in Section \ref{sec:conf-def}. Different from the three-row diagram \eqref{conformal-3rows} with de Rham complexes there,} \cite{arnold2021complexes} used either the first and the second rows or the second and the third rows of the following diagram
\begin{equation}\label{diagram-1}
\begin{tikzcd}
0 \arrow{r}{}&H^{q}\otimes \mathbb{V}\arrow{r}{\dev\grad} & H^{q-1}\otimes \mathbb{T}  \arrow{r}{\sym\curl} & H^{q-2}\otimes \mathbb{S} \arrow{r}{\div\div} & H^{q-4} \arrow{r}{} &  0\\
0 \arrow{r}{}&H^{q-1}\otimes \mathbb{V}\arrow{r}{\deff}\arrow[ur, "-\mskw"]  & H^{q-2}\otimes \mathbb{S}  \arrow{r}{\inc}\arrow[ur, "\mathcal{S}"]  & H^{q-4}\otimes \mathbb{S} \arrow{r}{\div}\arrow[ur, "\tr"]  & H^{q-5}\otimes \mathbb{V} \arrow{r}{} &  0\\
0 \arrow{r}{}&H^{q-2}\arrow{r}{\hess}\arrow[ur, "\iota"] & H^{q-4}\otimes \mathbb{S}  \arrow{r}{\curl} \arrow[ur, "\mathcal{S}"]& H^{q-5}\otimes \mathbb{T} \arrow{r}{\div}\arrow[ur, "2\vskw"] & H^{q-6}\otimes \mathbb{V} \arrow{r}{} &  0.
\end{tikzcd}
\end{equation}
 It is also observed in \cite{arnold2021complexes} that the two dimensional version of \eqref{diagram-1}, i.e., 
\begin{equation}\label{diagram-2D-reduction}
\begin{tikzcd}
0 \arrow{r}{} &H^{q}\otimes \mathbb{V} \arrow{r}{{\deff}} &H^{q-1}\otimes \mathbb{S} \arrow{r}{\rot\rot} & H^{q-3}\arrow{r} & 0\\
0 \arrow{r}{}&H^{q-1} \arrow{r}{{\hess}}\arrow[ur, "\iota"] &H^{q-3}\otimes \mathbb{S} \arrow{r}{\rot} \arrow[ur, "\tr"]& H^{q-4}\otimes \mathbb{V}\arrow{r} & 0,
 \end{tikzcd} 
\end{equation}
does not satisfy the injectivity/surjectivity conditions {of}  \cite{arnold2021complexes}. This is consistent with the fact that the trace-free Korn inequality does not hold in two space dimensions. 

Based on the construction in this paper, we now fix the trace-free Korn inequality in two space dimensions by adding terms involving third order operators. {In the language of Section \ref{sec:background}, the conformal deformation complex can be derived in any dimension $n\geq 3$ starting from $\mathfrak g=\mathfrak{so}(n+1,1)$ and the adjoint representation $\mathbb V=\mathfrak g$. We use the straightforward analog of this construction obtained from $\mathfrak g=\mathfrak{so}(3,1)$ and $\mathbb V=\mathfrak g$. We know from Section \ref{sec:background} that $\mathfrak g_0=\mathfrak{co}(n)$, so for $n=2$, this just gives $\mathbb C$ viewed as a subspace of $M_2(\Bbb R)$. Defining the operators $K^{i,j}$ and $S^{i,j}$ as in higher dimensions, one immediately concludes that  $S^{0,1} $and $S^{0,2}$ are injective while $S^{1,1}$ and $S^{1,2}$ are surjective. Consequently, in degree one the ``cohomology'' $\Upsilon^1=\ran(S^2)^\perp\cap \ker(S^1)$ has two components which both are functions with values in a two-dimensional space. One component sits in the top row and hence gives rise to a first order operator, which can be most easily interpreted as the Cauchy-Riemann operator. The other component sits in the bottom row and thus determines a third order operator. In degree two, the cohomology sits in the bottom row, so one obtains operators of order 3 and 1. Passing to vector proxies the resulting diagram looks as follows.}
 \begin{equation}\label{conformal-3rows-2D}
\begin{tikzcd}
0 \arrow{r}&H^{q}\otimes \mathbb{V}\arrow{r}{\grad} &H^{q-1}\otimes \mathbb{M}  \arrow{r}{\rot}\arrow[dl, "T^{1, 0}", shift left=1] &H^{q-2}\otimes \mathbb{V} \arrow{r}{}\arrow[dl, "T^{2, 0}", shift left=1] &  0\\
0 \arrow{r} &H^{q-1}\otimes (\mathbb{R}\oplus\mathbb{R})\arrow{r}{\grad} \arrow[ur, "S^{0, 1}"]&H^{q-2}\otimes (\mathbb{V}\oplus \mathbb{V})  \arrow{r}{\rot}\arrow[dl, "T^{1, 1}", shift left=1]\arrow[ur, "S^{1, 1}"]&H^{q-3}\otimes (\mathbb{R}\oplus \mathbb{R}) \arrow{r}{}\arrow[dl, "T^{2, 1}", shift left=1] &  0\\
0 \arrow{r}&H^{q-2}\otimes \mathbb{V}\arrow{r}{\grad} \arrow[ur, "S^{0, 2}"]&H^{q-3}\otimes \mathbb{M} \arrow{r}{\rot}   \arrow[ur, "S^{1, 2}"]&H^{q-4}\otimes \mathbb{V}   \arrow{r}{} & 0.
 \end{tikzcd}
\end{equation}
Here {$\rot(u_{1}, u_{2})=-\partial_{2}u_{1}+\partial_{1}u_{2}$,} $S^{0, 1}=(\iota, \mskw)^{t}$, $S^{1, 1}=(-J, I)^{t}$, $S^{0, 2}=(I, J)$, and $S^{1, 2}=(-2\sskw, -\tr)$, with 
$$
{ J \left (
\begin{array}{c}
u_{1}\\
u_{2}
\end{array}
\right ){:=
\left (
\begin{array}{cc}
0 & -1\\
1& 0
\end{array}
\right )
\left (
\begin{array}{c}
u_{1}\\
u_{2}
\end{array}
\right )
 }=\left (
\begin{array}{c}
-u_{2}\\
u_{1}
\end{array}
\right ),} 
$$
$$
 \sskw 
\left (
\begin{array}{cc}
a & b\\
c & d
\end{array}
\right ):=\frac{1}{2}(b-c),
\quad
\mskw  u:=
\left (
\begin{array}{cc}
0 & u\\
-u & 0
\end{array}
\right ).
$$
 These operators can be obtained by \eqref{DKKD} with $K^{i, 1}(u, v):= u\otimes x +Jv\otimes x$, and $K^{i, 2}(u):=(u\cdot x, Ju\cdot x)$. For \eqref{conformal-3rows-2D}, $T^{1, 0}=(\frac{1}{2}\tr, \sskw)$, $T^{2, 0}={\frac{1}{2}}(J, I)$, $T^{1, 1}={\frac{1}{2}}(I, -J)^{t}$, and $T^{2, 1}=({ -\frac{1}{2}\mskw}, -\frac{1}{2}\iota)^{ t}$.
 
From the matrix form of the operators \eqref{D-matrix}, the BGG operators have the form 
\begin{align*}
D^{0}\left ( 
\begin{array}{c}
u\\0\\0
\end{array}
\right )=
\left (
\begin{array}{c}
P_{\ran^{\perp}}d u\\0\\ P_{\ker}(dT)^{2}d u
\end{array}
\right )&=
\left (
\begin{array}{c}
\dev\deff  u\\0\\ 
\dev\deff T^{1, 1}\grad T^{1, 0}\grad u
\end{array}
\right ),
\end{align*}
and
\begin{align*}
D^{1}\left ( 
\begin{array}{c}
u\\0\\v
\end{array}
\right )=
\left (
\begin{array}{c}
0\\0\\ (dT)^{2}du+ d{v}
\end{array}
\right ),
\end{align*}
where 
\begin{align*}
 (dT)^{2}&du+ du=\rot T^{2, 1}\rot T^{2, 0}\rot u+\rot v.
\end{align*}

The output complex is 
\begin{equation}\label{output-2Dconformal}
\begin{tikzcd}
0 \arrow{r}{} &H^{q}\otimes \mathbb{V} \arrow{r}{{{D}^0}} & 
\left (
\begin{array}{c}
H^{q-1}\otimes (\mathbb{S}\cap \mathbb{T})\\
H^{q-3}\otimes (\mathbb{S}\cap \mathbb{T})
\end{array}
\right ) \arrow{r}{{D}^1} & H^{q-4}\otimes\mathbb{V}\arrow{r} & 0,
 \end{tikzcd} 
\end{equation}
where 
$$
{D}^0:=\left(
\begin{array}{c}
\dev\deff \\  
\dev\deff T^{1, 1}\grad T^{1, 0}\grad 
\end{array}
\right ), \quad
{D}^1=\left(
\begin{array}{c}
\rot T^{2, 1}\rot T^{2, 0}\rot  \\  
 \rot
\end{array}
\right )^{t}.
$$

To summarize, we get the following.
\begin{theorem}[Conformal Korn inequality in two space dimensions]\label{thm:2DKorn}
Let $\Omega\subset \mathbb{R}^{2}$ be a bounded Lipschitz domain. { For any $u\in H^{3}\otimes  \mathbb{V}$ and $u\perp \ker (D^{0})$,} there exists a positive constant $C$, such that { 
$$
\|u\|_{3}\leq C(\|\dev\deff  u\|_2+\| \dev\deff T^{1, 1}\grad T^{1, 0}\grad u\|).
$$}
\end{theorem}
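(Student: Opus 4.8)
The plan is to read the claimed estimate as the \emph{generalized Poincar\'e inequality} attached to the first differential $D^{0}$ of the output complex \eqref{output-2Dconformal}, exactly as the $n\geq 3$ conformal Korn inequality \eqref{conformal-korn-ineq} is the Poincar\'e inequality for the first differential of the conformal deformation complex of \cite{arnold2021complexes}. Taking $q=3$, we have $u\in\Upsilon^{0}=H^{3}\otimes\mathbb V$, while $\Upsilon^{1}=\Upsilon^{1,0}\oplus\Upsilon^{1,2}$, sitting inside the shifted complex $\tilde Z^{\bs}$, carries the norm $\|\cdot\|_{H^{2}}$ on the first slot (regularity $q-1$) and $\|\cdot\|_{L^{2}}$ on the second slot (regularity $q-3$); with the matrix form $D^{0}u=(\dev\deff\grad u,\,0,\,\grad T^{1,1}\grad T^{1,0}\grad u)$ computed above, $\|D^{0}u\|_{\Upsilon^{1}}$ is precisely the right-hand side of the asserted inequality. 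As in \eqref{conformal-korn-ineq}, the estimate is to be read for $u$ normalized to be $L^{2}$-orthogonal to $\ker(D^{0})$: by Theorem \ref{thm:twisted-bgg} and the cohomology discussion of Section \ref{sec:conf-def} adapted to $n=2$, this kernel is $\mathcal H^{0}$ of \eqref{output-2Dconformal}, and via the trivialization $F^{0}=\exp(K^{0})$ of Theorem \ref{thm:input-twisted} together with the identification $\Upsilon^{0}=Z^{0,0}$ it is the six-dimensional space spanned by the vector fields $c$, $a\,x+b\,x^{\perp}$, $(x\cdot w)\,x-(x^{\perp}\cdot w)\,x^{\perp}$ — the infinitesimal translations, rotations, dilations and special conformal transformations generating $\mathfrak{so}(3,1)$. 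This finite-dimensional ``conformal Killing'' space, invisible to the plain trace-free operator, is exactly what the added third-order term repairs.

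The one substantive step is to show that $D^{0}$ has closed range. I would obtain this from the abstract machinery: Theorem \ref{thm:input-twisted}, Lemma \ref{lem:change-reg} and Theorem \ref{thm:twisted-bgg}, combined with \cite[Theorem~1.1]{costabel2010bogovskiui}, identify the cohomology of \eqref{output-2Dconformal} with the direct sum of the Sobolev de-Rham cohomologies of the bounded Lipschitz domain $\Omega\subset\mathbb R^{2}$ with values in $\mathbb V_{0},\mathbb V_{1},\mathbb V_{2}$, all finite-dimensional. In particular $\mathcal H^{1}$ of \eqref{output-2Dconformal} is finite-dimensional, so $\ran(D^{0})$ has finite codimension in the closed subspace $\ker(D^{1})\subset\Upsilon^{1}$, and a finite-codimensional range of a bounded operator between Hilbert spaces is automatically closed. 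Equivalently, \eqref{output-2Dconformal} is a Hilbert complex with finite-dimensional cohomology, hence a Fredholm complex, so each of its differentials has closed range.

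Granting this, the estimate is soft: $D^{0}$ restricts to a bounded linear bijection between the Hilbert spaces $\ker(D^{0})^{\perp}\subset\Upsilon^{0}$ and $\ran(D^{0})$, so by the open mapping theorem it has a bounded inverse, i.e.\ there is $C>0$ with $\|u\|_{\Upsilon^{0}}\leq C\,\|D^{0}u\|_{\Upsilon^{1}}$ for all $u\perp\ker(D^{0})$. Unwinding the norms with $q=3$ and inserting the matrix form of $D^{0}$ gives exactly $\|u\|_{3}\leq C(\|\dev\deff\grad u\|_{2}+\|\grad T^{1,1}\grad T^{1,0}\grad u\|)$.

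The real work lies in the verifications leading to \eqref{output-2Dconformal}, all carried out in the preceding text: that the operators $K^{i,j}$ and $S^{i,j}$ built from multiplication by the coordinate functions and by $x^{\perp}$ satisfy \eqref{SKKS}--\eqref{DKKD} in two dimensions; that the underlying linear maps $\partial^{i,j}$ have the stated injectivity/surjectivity so that $\Upsilon^{i,j}\neq 0$ only for $(i,j)\in\{(0,0),(1,0),(1,2),(2,2)\}$ and the BGG sequence collapses to \eqref{output-2Dconformal}; and, crucially, that the change-of-regularity Lemma \ref{lem:change-reg} applies, which is what licenses using the genuine Sobolev de-Rham cohomology of \cite{costabel2010bogovskiui} despite the three rows living in different regularities. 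The only genuinely new ingredient beyond \cite{arnold2021complexes} is the three-row diagram; once the framework of Section \ref{sec:framework} is in place, the inequality follows from the functional-analytic argument above. A more hands-on alternative would replace the closed-range step by a Peetre--Tartar/compactness argument from the Rellich embedding $H^{3}\hookrightarrow H^{2}$ together with $\dim\ker(D^{0})<\infty$, but that route still needs the a priori estimate $\|u\|_{3}\lesssim\|D^{0}u\|_{\Upsilon^{1}}+\|u\|_{2}$, which is itself most cleanly read off from the complex.
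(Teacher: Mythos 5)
Your proposal is correct and follows exactly the route the paper intends: identify the right-hand side with $\|D^{0}u\|_{\Upsilon^{1}}$ for the first differential of \eqref{output-2Dconformal}, deduce finite-dimensional cohomology (hence closed range of $D^{0}$) from Theorems \ref{thm:input-twisted} and \ref{thm:twisted-bgg}, Lemma \ref{lem:change-reg} and \cite{costabel2010bogovskiui}, and conclude by the open mapping theorem on $\ker(D^{0})^{\perp}$. You are in fact slightly more careful than the stated theorem, which omits the necessary normalization of $u$ against the six-dimensional kernel of $D^{0}$ (the 2D Möbius algebra) that you correctly supply.
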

{ 
We recall that the key for proving the inequality in Theorem \ref{thm:2DKorn} is to observe that $D^{0}: H^{3}\otimes \mathbb{V}\to H^{2}\otimes (\mathbb{S}\cap \mathbb{T})\oplus H^{1}\otimes (\mathbb{S}\cap \mathbb{T})$ has closed range, as the cohomology at index one has finite dimension. Then $D^{0}$ is a bijection between Banach spaces $\ker(D^{0})^{\perp}\subset H^{3}\otimes \mathbb{V}$ and $\ran(D^{0})$. The desired inequality immediately follows from the Banach closed range theorem. See \cite[(14)]{arnold2021complexes} for more details.   The orthogonality in Theorem \ref{thm:2DKorn}, $u\perp \ker (D^{0})$, is naturally given with respect to the $H^{3}$ inner product. Nevertheless, as $\ker(D^{0})$ has finite dimension, this condition can be replaced by the $L^{2}$ orthogonality, for example. 

A  more explicit form of the second term $\dev\deff T^{1, 1}\grad T^{1, 0}\grad u$ has the following form. If $u=(u_{1}, u_{2})$, then
$$
\dev\deff T^{1, 1}\grad T^{1, 0}\grad u=
\left ( 
\begin{array}{cc}
a & b\\
b & -a
\end{array}
\right ),
$$
where 
$$
a=\frac{1}{8}(\partial_{1}^{3}u_{1}-\partial_{2}^{3}u_{2}+3\partial_{1}^{2}\partial_{2}u_{2}-3\partial_{1}\partial_{2}^{2}u_{1}),
$$
and
$$
b=\frac{1}{8}(-\partial_{1}^{3}u_{2}-\partial_{2}^{3}u_{1}+3\partial_{1}^{2}\partial_{2}u_{1}+3\partial_{1}\partial_{2}^{2}u_{2}).
$$
}

\begin{remark}
In the classical approaches for the Korn inequalities, the proof for results in $L^{p}$ with $p\neq 2$ are more difficult than the case of $p=2$. {With small modifications,} the complex-based argument in \cite{arnold2021complexes} and in this paper leads to a unified proof for inequalities in a broad range of function spaces, i.e., those spaces in  \cite{costabel2010bogovskiui} for the de Rham complexes. 
 It is known that the Korn and conformal Korn inequalities do not hold for  $W^{1, p}$ with $p=1$ or $p=\infty$ \cite[Chapter 7]{demengel2012functional}, \cite{breit2017trace,breit2012sharp}. This is consistent with the fact that the results in \cite{costabel2010bogovskiui} exclude these two cases.
\end{remark}

{As indicated above, the main difference between the 2D complex and conformal deformation complexes in higher dimensions lies in the location of (Lie algebraic) cohomology, which affects the order of the operators in the resulting BGG sequences. This also occurs in dimensions $n\geq 3$, where it reflects the fact that the fundamental conformal invariants change.} While in dimension $3$, this fundamental invariant is the Cotton-York tensor that depends on third derivatives of a metric in the conformal class, the fundamental invariant for $n\geq 4$ is the Weyl curvature, which only depends on second derivatives  of such a metric. {The special form of the complex} in dimension $2$ also has a geometric background. This is not related to conformal geometry but to a refinement. Indeed, the isomorphism $\mathfrak{co}(2)=\mathbb C$ observed above implies that in dimension $2$, conformal geometry reduces to complex analysis, which also explains the occurrence of the Cauchy-Riemann operator in our complex. To obtain a two dimensional analog of higher-dimensional conformal geometry, one has to add to a conformal structure an additional ingredient called a M\"obius structure. This is given by a symmetric tensor of rank two, which plays the role of an abstract version of the Schouten tensor (a trace-adjustment of the Ricci curvature) in higher dimensions, see \cite{Calderbank:Mobius} for details. The third order part in $D^{0}$ describes the infinitesimal change of that tensor {under a diffeomorphism}, while the Cauchy-Riemann operator describes the infinitesimal change of the conformal structure. In analogy to higher dimensional conformal geometry, an M\"obius-structure always has a finite-dimensional automorphism group with dimension bounded above by $\dim(\mathfrak{so}(3,1))=6$.

\section{Continuum with  microstructures}\label{sec:Cosserat}

 In this section, we establish a cohomological approach for the linear Cosserat model and generalize this model from this new perspective.
 
 {
 \subsection{Overview}

As a basic model, linear elasticity models material with a small deformation and a linear constitutive law, i.e., a linear relation between stress and strain \cite{ciarlet2021mathematical}. Mathematically, the solution of linear elasticity can be obtained as a minimization of an energy functional 
\begin{equation}\label{energy-elasticity}
\|\deff u\|^{2}_{C}-(f, u),
\end{equation}
 where $C$ is a (fourth order) elasticity tensor; the weighted inner product $\|\deff u\|_{C}:=\int \deff u: C: \deff u\, dx$ and $f$ is an external force with $(f, u):=\int f\cdot u\,dx$. 
 
 One immediately recognizes that \eqref{energy-elasticity} is related to the elasticity complex as the quadratic term involves the operator ${D}^{0}$ from the complex. Therefore the linear elasticity equation is an analog of the Poisson equation in the context of the elasticity complex (more precisely, the Euler-Lagrange equation of \eqref{energy-elasticity} corresponds to the Hodge-Laplacian problem of the elasticity complex at index zero).   This simple connection has several consequences. For example, one may introduce the stress as an independent variable and solve a Hodge-Laplacian problem of the elasticity complex at index $n$.
  This leads to the Hellinger-Reissner principle, which is mathematically equivalent to the linear elasticity problem. Other equivalent formulations can be obtained from complexes. For example, the intrinsic formulation \cite{ciarlet2009intrinsic} involves index two of the elasticity complex, while the Hu-Washizu principle \cite{washizu1968variational}, involves the displacement, strain and stress at indices zero, one and two of the elasticity complex, respectively. Mathematically equivalent formulations may lead to different numerical performances. For example, discretizations based on the Hellinger-Reissner principle avoid the locking phenomenon in the computation with the displacement formulation (the fact that the convergence deteriorates as the material becomes incompressible)  \cite{Boffi.D;Brezzi.F;Fortin.M.2013a}. For classical elasticity, the linearized curvature of $\deff u$ vanishes, i.e., $D^{1}\circ D^{0}=0$.  One may also incorporate defects in standard elasticity models as the violation of this compatibility condition \cite{amstutz2019incompatibility}.

It is usually desirable to model a thin elastic body as a two dimensional surface. In such dimension reduction,  one expects that as the thickness of the body goes to zero, the two dimensional model approximates the three dimensional one (a rigorous justification of the convergence can be done in terms of $\Gamma$-convergence, which, however, is not the focus of this paper). Different assumptions in the dimension reduction lead to different models. The Kirchhoff-Love model uses a mid-surface plane to represent the body. With the Kirchhoff-Love assumptions, the plate bending problem boils down to determining the out-plane displacement $w$ (the component of the displacement which is in the transversal direction to the mid-surface of the plate) satisfying a scalar biharmonic equation. That is, the energy functional is $\|\Delta w\|_{A}^{2}-(g, w)$, where   $A$ is a fourth-order tensor. See, for example, \cite[(4)-(5)]{arnold2002range}. In the Reissner-Mindlin model, the displacement is recovered from a scalar field $w$ (out-plane displacement) and a vector field $\theta$. The independent variable $\theta$ represents the rotation angle of the normal vector to the mid-surface. The energy functional for determining $w$ and $\theta$ is $\|\grad w-\theta\|_{E}^{2}+\|\deff \theta\|^{2}_{\tilde C}-(G, \theta)-(F, w)$ \cite[(8)-(10)]{arnold2002range}. Again, $\|\cdot\|_{E}$ and $\|\cdot\|_{\tilde C}$ are weighted $L^{2}$ inner products and $G$ and $F$ are given functions.  In the Kirchhoff-Love model, the assumptions are stronger such that $\theta$ is determined as a first order derivative of $w$, and is thus eliminated from the equations. We refer to \cite{ciarlet1997mathematical} for an in-depth discussion on plate models. 

The Kirchhoff-Love plate, i.e., the biharmonic equation, is naturally related to the Hessian complex, as the quadratic term in the energy functional can be written as a weighted $L^{2}$ inner product of $\hess w$. Nevertheless, cohomological structures have not been identified for the Reissner-Mindlin plate and Cosserat elasticity. As we have seen for linear elasticity, such structures are desirable for finite element exterior calculus and developing new models and analysis. 

The main contribution of this section is to observe that the structures of the Reissner-Mindlin plate and Cosserat elasticity are encoded in the twisted de~Rham complexes. In other words, we establish the horizontal arrows in the diagram \eqref{diagram:models}. The BGG machinery deriving the BGG complex (e.g., standard elasticity) from the twisted de~Rham complex (e.g., Cosserat elasticity) thus has a physical meaning and provides a mathematical approach for eliminating degrees of freedom from physical models. In the rest of this section, we give a precise meaning to such correspondence, and list analogs that come from other complexes.

  }
\subsection{Linear Cosserat model as a Hodge-Laplacian}

We will show that the linear Cosserat model corresponds to a Hodge-Laplacian boundary value problem of the twisted de Rham complex that corresponds to the elasticity complex. To make this statement precise, we will follow the framework of Hilbert complexes \cite{arnold2018finite} and investigate the $L^{2}$ twisted de Rham complex with unbounded operators and its domain complex. The Sobolev regularity required for this purpose is slightly different from the setting in \cite{arnold2021complexes} for deriving the elasticity complex. The difference is similar to the situation discussed in Section \ref{sec:struct}.

In the setting of Sections \ref{sec:struct} and \ref{sec:background}, we take $n=3$, $\frak g=\frak{sl}(n+1,\Bbb R)$ and $\Bbb V=\Lambda^2\mathbb R^{(n+1)*}$. This leads to $N=2$, $\Bbb V_0=\Bbb R^{3*}$ and $\Bbb V_1=\Lambda^2\Bbb R^{3*}$. We denote elements of $Z^{i,0}$ as triples $(\phi_1,\phi_2,\phi_3)$ of $i$-forms and elements of $Z^{i,1}$ as families $\psi_{jk}$ of $i$-forms with two-skew symmetric indices, i.e.\ $\psi_{jj}=0$ and $\psi_{kj}=-\psi_{jk}$. Since there are just two rows, we just have one K-operator, which is given by
  $K(\psi)_j=\sum_\ell x^\ell \psi_{\ell j}$, which leads to $S(\psi)_j=\sum_\ell dx^\ell\wedge\psi_{\ell j}$. In this case, no additional identities need to be verified, so our machinery applies. The form of \cite{arnold2021complexes} is then obtained by passing to vector proxies. This looks as follows. 

Let
$$
d_{V}^{i}:=
\left (
\begin{array}{cc}
d^{i} & -S^{i}\\
0 & d^{i}
\end{array}
\right ),
$$
where $d^{i}=\grad, \curl, \div$ for $i=0, 1, 2$, $S^{0}=-\mskw$, $S^{1} :=\mathcal{S}$ and $S^{2}=2\vskw$. We have $d^{i+1}S^{i}=-S^{i+1}d^{i}$, for $i=0, 1$ and this implies  $d_{V}^{i+1}\circ d_{V}^{i}=0$, $i=0, 1$. 
Below we focus on the following $L^{2}$ twisted de Rham complex with unbounded linear operators
{\footnotesize\begin{equation}\label{L2-twisted-elasticity}
  \begin{tikzcd}
0\arrow{r}&
\left ( \begin{array}{c}
L^{2}\otimes \mathbb{V} \\
L^{2}\otimes \mathbb{V} 
 \end{array}\right )
 \arrow{r}{
d_{V}^{0}
 }&  \left ( \begin{array}{c}
L^{2}\otimes \mathbb{M} \\
L^{2}\otimes \mathbb{M} 
 \end{array}\right ) \arrow{r}{{d_{V}^{1}}} &\left ( \begin{array}{c}
L^{2}\otimes \mathbb{M} \\
L^{2}\otimes \mathbb{M} 
 \end{array}\right )  \arrow{r}{d_{V}^{2}} &   \left ( \begin{array}{c}
L^{2}\otimes \mathbb{V} \\
L^{2}\otimes \mathbb{V} 
 \end{array}\right )\arrow{r}{} &0,
\end{tikzcd}
\end{equation}}
and its domain complex
\begin{equation}\label{domain-twisted-elasticity}
\begin{tikzcd}
0\arrow{r}&
H(d_{V}^{0})
 \arrow{r}{
d_{V}^{0}
 }&  H(d_{V}^{1}) \arrow{r}{{d_{V}^{1}}} &H(d_{V}^{2})  \arrow{r}{d_{V}^{2}} &  H(d_{V}^{3})\arrow{r}{} &0.
\end{tikzcd}
\end{equation}
Here for any linear operator $D$, we define $H(D):=\{u\in L^{2} : Du\in L^{2}\}$.  By defintion, we have $H(d_{V}^{i})=H(d^{i})\oplus H(d^{i})$.  The twisted complex \eqref{domain-twisted-elasticity} is derived from the BGG diagram 
\begin{equation} 
\begin{tikzcd}
0 \arrow{r}{} &H^{1}\otimes \mathbb{V} \arrow{r}{{\grad}} &H(\curl)\otimes \mathbb{V} \arrow{r}{\curl} & H(\div)\otimes \mathbb{V}\arrow{r}{\div} & L^{2}\otimes \mathbb{V}\arrow{r} & 0\\
0 \arrow{r}{}&H^{1}\otimes \mathbb{V} \arrow{r}{{\grad}}\arrow[ur, "-\mskw"] &H(\curl)\otimes \mathbb{V} \arrow{r}{\curl} \arrow[ur, "\mathcal{S}"]&  H(\div)\otimes \mathbb{V}\arrow[ur, "2\vskw"]\arrow{r}{\div}  & L^{2}\otimes \mathbb{V}\arrow{r} & 0
 \end{tikzcd} 
\end{equation}
Note that, as a direct consequence of the identity $dS=-Sd$, we have $S^{i}H(d^{i})\subset H(d^{i+1})$ (for example, $\mathcal{S}H(\curl)\otimes \mathbb{V}\subset H(\div)\otimes \mathbb{V}$).

Using a similar argument as \cite[Theorem 1]{arnold2021complexes}, we conclude that the cohomology of \eqref{domain-twisted-elasticity} is isomorphic to the cohomology of the Sobolev complex 
\begin{equation}\label{q-q-1}
\begin{tikzcd}
0\arrow{r}&
\left ( \begin{array}{c}
H^{q}\otimes \mathbb{V} \\
H^{q}\otimes \mathbb{V} 
 \end{array}\right )
 \arrow{r}{
d_{V}^{0}
 }&  \left ( \begin{array}{c}
H^{q-1}\otimes \mathbb{M} \\
H^{q-1}\otimes \mathbb{M} 
 \end{array}\right ) \arrow{r}{{d_{V}^{1}}} &\left ( \begin{array}{c}
H^{q-2}\otimes \mathbb{M} \\
H^{q-2}\otimes \mathbb{M} 
 \end{array}\right )  \arrow{r}{d_{V}^{2}} &   \left ( \begin{array}{c}
H^{q-3}\otimes \mathbb{V} \\
H^{q-3}\otimes \mathbb{V} 
 \end{array}\right )\arrow{r}{} &0,
\end{tikzcd}
\end{equation}
which further has isomorphic cohomology as the sum of the de Rham complexes for any real number $q$ by applying the argument in Section \ref{sec:twisted}.

We briefly recall the abstract setting of the Hodge-Laplacian boundary value problems \cite{arnold2018finite,Arnold.D;Falk.R;Winther.R.2006a}.  Let $(W^{\bs}, \mathscr{D}^{\bs})$ be a Hilbert complex with unbounded linear operators $\mathscr{D}^{\bs}$, and $(V^{\bs}, \mathscr{D}^{\bs})$ be its domain complex where $\mathscr{D}^{\bs}$ becomes bounded. Let $V^{\ast}_{k}$ be the domain of $\mathscr{D}^{\ast}_{k}$, the adjoint of ${\mathscr{D}^{k-1}}$. The Hodge-Laplacian operator is $L^{k}:=\mathscr{D}^{\ast}_{k+1}\mathscr{D}^{k}+\mathscr{D}^{k-1}\mathscr{D}^{\ast}_{k}$, with the domain $
D(L^{k})=\{u\in V^{k}\cap V_{k}^{\ast} : \mathscr{D}u\in V_{k+1}^{\ast}, \mathscr{D}^{\ast}u\in V^{k-1}\}$.

From this perspective, once we specify a closed Hilbert complex, the Hodge-Laplacian problems and their well-posedness follow from general arguments.

In our case, we choose \eqref{L2-twisted-elasticity} as $(W^{\bs}, \mathscr{D}^{\bs})$ and choose \eqref{domain-twisted-elasticity} as the domain complex. Our main claim is that with suitable inner products, the Hodge-Laplacian problem at index 0 is the linear Cosserat elasticity model. Next, we verify this claim by comparing to the formulation in \cite{cosserat}. In fact, with certain metric $C$ for the space at index 1, we have
\begin{align}\label{cosserat-energy}\nonumber
(d_{V}^{0}(u, &\omega), d_{V}^{0}(u, \omega))_{C}\\\nonumber
&:=\|\grad u +\mskw \omega\|_{C_{1}}^{2}+\|\grad\omega\|_{C_{2}}^{2}\\\nonumber
&:=\mu\|\sym\grad u\|^{2}+\frac{\mu_{c}}{2}\|2\vskw(\grad u+\mskw \omega) \|^{2}+\frac{\lambda}{2}\|\div u\|^{2}\\
&\quad + \frac{\gamma+\beta}{2}\|\sym \grad\omega\|^{2}+\frac{\gamma-\beta}{4}\|\curl \omega\|^{2}+\frac{\alpha}{2}\|\div \omega\|^{2},
\end{align}
where we have used the identities $2\vskw \grad u= \curl u$ and $\vskw\circ\mskw=\mskw^{-1}\circ\skw\circ \mskw=I$.  Here the metrics { $C_{1}$ and $C_{2}$} are defined by the physical parameters. We refer to \cite[Chapter 8]{arnold2018finite} for the use of weighted inner products for the Maxwell equations and linear elasticity.

Now we see that \eqref{cosserat-energy} exactly corresponds to the linear Cosserat model in \cite{cosserat}. In particular, $\mu\|\sym\grad u\|^{2}+\frac{\mu_{c}}{2}\|2\vskw(\grad u+\mskw \omega) \|^{2}+\frac{\lambda}{2}\|\div u\|^{2}$ is the strain energy and $ \frac{\gamma+\beta}{2}\|\sym \grad\omega\|^{2}+\frac{\gamma-\beta}{4}\|\curl \omega\|^{2}+\frac{\alpha}{2}\|\div \omega\|^{2}$ is the curvature energy. Also note that $\vskw$ restricted on skew symmetric matrices corresponds to $\mathrm{axl}$ in \cite{cosserat}, and $\omega$ corresponds to $-\phi$ in \cite{cosserat}. Moreover, $\grad u+\mskw \omega$ is the infinitesimal first Cosserat stretch tensor, which corresponds to the $d_{V}^{0}$ operator in the twisted complex;  $-\grad\omega$ is the micropolar curvature tensor; $\mu, \lambda$ are the classical Lam\'e moduli; $\alpha, \beta, \gamma$ are additional micropolar moduli with the dimension of a force; $\mu_{c}$ is the Cosserat couple modulus.

We also observe that the above discussions for the Hessian complex and its twisted de Rham version correspond to the Kirchhoff model and the modified Reissner-Mindlin plate model, respectively.

The twisted de Rham complex corresponding to the Hessian complex in 2D \cite{arnold2021complexes} is:
\begin{equation}\label{twisted-elasticity-2D}
\begin{tikzcd}
0\arrow{r}&
\left ( \begin{array}{c}
H^{1} \\
 H^{1}\otimes \mathbb{V} 
 \end{array}\right )
 \arrow{r}{
d_V^{0}
 }&  \left ( \begin{array}{c}
 H(\rot)\\
H(\rot)\otimes \mathbb{V}
 \end{array}\right ) \arrow{r}{{d_V^{1}}}  &   \left ( \begin{array}{c}
L^{2}\\
 L^{2}\otimes \mathbb{V}
 \end{array}\right )\arrow{r}{} &0,
\end{tikzcd}
\end{equation}
where 
$$
d_V^{0}=\left ( \begin{array}{cc}
\grad & -I\\
0 & \grad
 \end{array}\right ),\quad
 d_V^{1}=\left ( \begin{array}{cc}
\rot & -\mathrm{sskw}\\
0 & \rot
 \end{array}\right ).
$$
The Hodge-Laplacian problem at index zero has the energy functional 
\begin{equation}\label{eqn:modified-reissner-mindlin}
\|\grad u-\phi\|_{A}^{2}+\|\grad\phi\|_{B}^{2}.
\end{equation}
With proper inner products, this corresponds to a {modified} Reissner-Mindlin plate model \cite[p.1279]{arnold1989uniformly}. The only difference between \eqref{eqn:modified-reissner-mindlin} and the original Reissner-Mindlin model \cite[(10)]{falk2008finite} is that
{ the energy of the latter involves a term of $\deff \phi$, which we denote by $\|\deff\phi\|_{\tilde{B}}^{2}:=\int \tilde{B}^{ijkl}(\deff u)_{ij}(\deff u)_{kl}\, dx$ with a fourth order tensor $\tilde{B}^{ijkl}$.  As $\deff u$ is the symmetric part of $\grad u$, there exists another fourth order tensor $B^{ijkl}$, such that  $\|\grad\phi\|_{{B}}^{2}:=\int {B}^{ijkl}(\grad u)_{ij}(\grad u)_{kl}\, dx=\|\deff u\|_{\tilde{B}}^{2}$. Assume that $\tilde{B}$ is positive definite, i.e., $\|\deff\phi\|_{\tilde{B}}\geq C\|\deff \phi\|$ with a positive constant $C$. Then using the Korn inequality (with proper boundary conditions or orthogonality), we get
$$
\|\grad\phi\|_{{B}}=\|\deff u\|_{\tilde{B}}^{2}\geq C\|\deff \phi\|\geq C' \|\grad\phi\|.
$$
On the other hand, if all the components of $B$ are bounded, we have $\|\grad\phi\|\leq \beta \|\grad\phi\|_{{B}}$ for some constant $\beta$. Therefore $\|\cdot\|_{{B}}$ is non-degenerate and leads to an equivalent $L^{2}$ norm, i.e., there exists positive constants $C_1$ and $C_{2}$ such that
$$
C_{1}\|\grad u\|\leq \|\grad u\|_{B}\leq C_{2}\|\grad u\|.
$$
 }

The Hessian complex in 2D is
\begin{equation}\label{hessian-2D}
\begin{tikzcd}
0\arrow{r}&
H^{2}
 \arrow{r}{
\hess
 }&  H(\rot; \mathbb{S}) \arrow{r}{\rot} &L^{2}\otimes \mathbb{V}\arrow{r}{} & 0.
\end{tikzcd}
\end{equation}
The  Hodge-Laplacian problem of \eqref{hessian-2D} { at index zero} describes the Kirchhoff plate, which is the $\Gamma$-limit of the Reissner-Mindlin plate as the thickness tends to zero. This connection, although not formulated in terms of the twisted complex, was used to derive a discretization for the biharmonic equation \cite{feec-lecture}, \cite[Chapter 5]{quenneville2015new}. 

Our results on finite dimensional uniform representation of cohomology imply the Poincar\'e inequalities for the twisted de Rham complexes (although these inequalities readily follow from a straightforward argument in this particular case), and hence the well-posedness of the variational problem. More importantly, fitting the linear Cosserat model in the cohomological framework makes it possible to bring in tools and general results from the abstract theory. For example, the Cosserat version of the ``rigid body motion'' consists of functions in the kernel of $d_V^{0}$, which have the form { $(a+b\wedge x, b)$}, where $a, b\in \mathbb{R}^{3}$ are constant vectors. This can be obtained by transforming the kernel of $\grad$ in the sum de Rham complex by \eqref{matrix-F}. Once we fix the domain complex, the general theory implies that the  formulations are automatically well-posed and { may also indicate the correct form of the boundary conditions. }
  Moreover, we may obtain different formulations, e.g., Cosserat versions of the primal formulation, the Hellinger-Reissner principle, the intrinsic formulation \cite{ciarlet2009intrinsic}, and the Hu-Washizu principle \cite{washizu1968variational}. More importantly, the idea of modelling defects of linear elasticity by the violation of the Saint-Venant condition (vanishing of the linearized curvature encoded in the elasticity complex) \cite{amstutz2019incompatibility} provides a plausible way to investigate defects of Cosserat models by generalizing the ``Cosserat strain'' $(e, p):=d_{V}^{0}(u, w)$ which satisfies the compatibility $d_{V}^{1}(e, p)=0$ to more general functions that violate this condition. In fact, discussions in this direction can be found in, e.g., \cite[(2.14a, 2.15a)]{gunther1958statik}, where $d_{V}^{1}$ in the twisted de Rham complex is involved. However, a detailed discussion of the above issues is beyond the scope of this article.

\subsection{Potential generalizations}

 For potential generalizations, we first consider diagrams that lead to the conformal deformation complex. In \cite{arnold2021complexes}, the following diagram is used to derive the conformal Hessian complex:
 \begin{equation*}
\begin{tikzcd}
0 \arrow{r} &H^{q}\otimes \mathbb{V}\arrow{r}{\dev\grad}  &H^{q-1}\otimes \mathbb{T} \arrow{r}{\sym\curl}  &H^{q-2}\otimes \mathbb{S}^{1}\arrow{r}{\div\div}  &H^{q-4}\otimes \mathbb{R}\arrow{r}{}&0\\
0 \arrow{r}{}&H^{q-1}\otimes \mathbb{V}\arrow{r}{\deff}\arrow[ur, "-\mskw"]  & H^{q-2}\otimes \mathbb{S}  \arrow{r}{\inc}\arrow[ur, "\mathcal{S}"]  & H^{q-4}\otimes \mathbb{S} \arrow{r}{\div}\arrow[ur, "\tr"]  & H^{q-5}\otimes \mathbb{V} \arrow{r}{} &  0,
\end{tikzcd}
\end{equation*}
where we recall that $S^{1}:=u^{T}-\tr(u)I$.
The motivation was to eliminate the skew symmetric part from $\dev\grad u$ to get the operator in the conformal Korn inequality.

From this diagram, we propose a generalized elasticity model with the energy 
\begin{equation}\label{generalization1}
\alpha\|\dev\grad\phi+\mskw u\|_{C}^{2}+\|\deff u\|_{A}^{2},
\end{equation}
where $\|\cdot \|_{A}$ and $\|\cdot\|_{C}$ are weighted $L^{2}$ norms. In particular, we are interested in the case where $\|\cdot\|_{A}$ is the standard inner product in elasticity, i.e., 
$$
\|\deff u\|_{A}^{2}:=\mu\|\sym\grad u\|^{2}+\frac{\lambda}{2}\|\div u\|^{2}.
$$
Then the formal limit $\alpha\rightarrow 0$ in \eqref{generalization1} corresponds to the standard elasticity.
 
Another possible generalization of the Cosserat model is from the three-row diagram \eqref{conformal-3rows}. In particular, the energy defined by the first operator $d_{V}^{0}$ in the twisted complex is
\begin{equation}\label{generalization2}
\|\grad u -\iota \sigma+\mskw \omega\|_{C_{1}}^{2}+\|{(\grad \sigma -\phi)+(\grad\omega+\mskw \phi)}\|_{C_{2}}^{2}+\|\grad\phi\|_{C_{3}}^{2}.
\end{equation}
Now the first term $\|\grad u -\iota \sigma+\mskw \omega\|_{C_{1}}$ contains an additional $\iota \sigma$ term compared to the Cosserat model. This term can be interpreted as a pointwise dilation, in addition to the rotational degrees of freedom.

From the 2D diagram \eqref{conformal-3rows-2D}, we also obtain a generalization of the plate models, with the energy
\begin{equation}
\|\grad u-\iota \sigma-\mskw \omega\|_{C_{1}}^{2}+\|{(\grad \sigma-\phi)+(\grad\omega-J \phi)}\|_{C_{2}}^{2}+\|\grad\phi\|_{C_{3}}^{2}.
\end{equation}
For each of the models proposed above, analytic results, e.g. Poincar\'e inequalities and well-posedness follow from a standard argument based on complexes \cite[Chapter 2]{arnold2021complexes}. 

Other generalizations of the linear Cosserat model exist, e.g., \cite{neff2009new} with a conformally invariant curvature energy.

\section{Conclusion}
 In this paper, we established some BGG complexes with weak regularity and computed their cohomology. The generality and the geometric and algebraic structures allow us to fix the 2D conformal Korn inequality by adding a third order term in addition to the Cauchy-Riemann operator and achieve generalizations of the linear Cosserat elasticity model from a cohomological perspective. These applications show a deep connection between analysis (e.g. inequalities), geometry, homological algebra and continuum mechanics.

This relaxation of the injectivity/surjectivity conditions in \cite{arnold2021complexes} renders a feasible approach to fit finite element spaces in the BGG diagrams. Therefore  numerical schemes with weakly imposed constraints (symmetry, trace-freeness etc.) with standard de Rham finite elements should be promising (c.f., \cite{arnold2007mixed}).  
We proposed several model problems as generalizations of the Cosserat model from a cohomological perspective. With the idea of the Erlangen program in mind, we leave it as future work to investigate the mechanical significance, numerics and nonlinear versions of the generalized models in Section \ref{sec:Cosserat}.  Applications of these complexes in (numerical) general relativity, e.g., potential-based formulations in the direction of \cite{beig2020linearised}, can also be further directions.

The complexes we have constructed have a close relation to general relativity (in fact, the Cosserat model and general relativity were Cartan's motivation for his development of the concept of torsion \cite{scholz2019cartan}). For example, the conformal deformation complex encodes the York split and the transverse-trace-free (TT) gauge in the description of gravitational waves. Beig and Chrusciel \cite{beig2020linearised} discussed the applications of complexes in the linearized Einstein constraint equations with the conformal deformation complex and the conformal Hessian complexes playing a key role. Our results describe the cohomology (and thus analytic properties) of these complexes. In \cite{beig2020linearised} the authors proceed to define the so-called momentum complex, by a construction that looks similar to repeated application of the  ``two-step construction'' in \cite{arnold2021complexes}. While the momentum complex is not an example of a BGG sequence, we expect that our methods can be applied to the study of this complex. As with other applications to general relativity, this will be taken up elsewhere.

\section*{Appendix: Matrix form of operators}  
 The operators involved in the theorems and proofs in Section \ref{sec:framework} can be given in an explicit component-wise form, which are listed below:
  $$
{G}=-\sum_{k=0}^{\infty}(Td)^{k}T=
-  \left (
  \begin{array}{cccccc}
  0 &0&0&0&\cdots&0\\
  T & 0&0&0&\cdots&0\\
  TdT & T & 0&0&\cdots&0\\
 (Td)^{2}T&  TdT & T & 0 &\cdots&0\\
  &\cdots&\cdots&\cdots&&
  \end{array}
  \right ),
  $$
  $$
  A=I-Gd_{V}=
  \left (
  \begin{array}{cccccc}
  I &0&0&0&\cdots&0\\
  Td & P_{\ker} &0&0&\cdots&0\\
  (Td)^{2} & TdP_{\ker} & P_{\ker}&0&\cdots&0\\
 (Td)^{3}&  (Td)^{2}P_{\ker} & TdP_{\ker} & P_{\ker} &\cdots&0\\
  &\cdots&\cdots&\cdots&&
  \end{array}
  \right ),
  $$
  and
    $$
  d_{V}A=
  \left (
  \begin{array}{cccccc}
  P_{\ran^{\perp}}d &0&0&0&\cdots&0\\
    P_{\ran^{\perp}}dTd &   P_{\ran^{\perp}}dP_{\ker} &0&0&\cdots&0\\
   P_{\ran^{\perp}}(dT)^{2}d &  P_{\ran^{\perp}}dTdP_{\ker} &   P_{\ran^{\perp}}dP_{\ker} &0&\cdots&0\\
 P_{\ran^{\perp}}(dT)^{3}d &P_{\ran^{\perp}}(dT)^{2}dP_{\ker}&  P_{\ran^{\perp}}dTdP_{\ker} &   P_{\ran^{\perp}}dP_{\ker} &\cdots&0\\
  &\cdots&\cdots&\cdots&&
  \end{array}
  \right ).
  $$
  Restricted to $\Upsilon$, $D= {P_{\ker}}d_{V}A$ becomes  
     \begin{equation}\label{D-matrix}
  D=P_{\Upsilon}d_{V}A=
  \left (
  \begin{array}{cccccc}
P_{\Upsilon}d &0&0&0&\cdots&0\\
P_{\Upsilon}dTd &   P_{\Upsilon}d  &0&0&\cdots&0\\
P_{\Upsilon}(dT)^{2}d &  P_{\Upsilon}dTd  &  P_{\Upsilon}d &0&\cdots&0\\
P_{\Upsilon}(dT)^{3}d &P_{\Upsilon}(dT)^{2}d  & P_{\Upsilon}dTd &   P_{\Upsilon}d  &\cdots&0\\
&\cdots&\cdots&\cdots&&
  \end{array}
  \right ).
  \end{equation}
  Furthermore, we have
  $$
  B=P_{\Upsilon}(I-d_{V}G)=\left (
  \begin{array}{cccccc}
  P_{\Upsilon} & 0 & 0 & 0 & \cdots & 0\\
    P_{\Upsilon}dT & P_{\Upsilon} & 0 & 0 & \cdots & 0\\
        P_{\Upsilon}(dT)^{2} & P_{\Upsilon}dT &P_{\Upsilon}  & 0 & \cdots & 0\\
                P_{\Upsilon}(dT)^{3} &   P_{\Upsilon}(dT)^{2}  &P_{\Upsilon}dT  & P_{\Upsilon} & \cdots & 0\\
  &\cdots&\cdots&\cdots&&
  \end{array}
  \right ).
  $$
  From Theorem \ref{thm:input-twisted} and Theorem \ref{thm:twisted-bgg}, the composition $B\circ F$ gives a cohomology-preserving map from the input complexes to the BGG complex. The component-wise form of $B\circ F$ is the following:
  
 \resizebox{\textwidth}{!}{$%
   $$
  B\circ F=\left (
  \begin{array}{ccccc}
  P_{\Upsilon} & P_{\Upsilon}K & \frac{1}{2}P_{\Upsilon}K^{2} & \frac{1}{6}P_{\Upsilon}K^{3} & \cdots \\
    P_{\Upsilon}dT & P_{\Upsilon}dTK+P_{\Upsilon} & \frac{1}{2}P_{\Upsilon}dTK^{2}+P_{\Upsilon}K & \frac{1}{6}P_{\Upsilon}dTK^{3}+\frac{1}{2}P_{\Upsilon}K^{2}  & \cdots \\
        P_{\Upsilon}(dT)^{2} & P_{\Upsilon}(dT)^{2}K+P_{\Upsilon}dT &\frac{1}{2}P_{\Upsilon}(dT)^{2}K^{2}+ P_{\Upsilon}dTK+P_{\Upsilon}  & \frac{1}{6}P_{\Upsilon}(dT)^{2}K^{3}+ \frac{1}{2}P_{\Upsilon}dTK^{2}+ P_{\Upsilon}K & \cdots \\
        &\cdots&\cdots&\cdots&
  \end{array}
  \right ).
  $$ 
    $}%

\section*{Acknowledgments}
The first author is supported by the Austrian Science Fund (FWF): P33559-N. The second author was supported   by a Hooke Research Fellowship and a Royal Society University Research Fellowship.

The authors would like to thank Douglas N. Arnold, Andrea Dziubek and Endre Suli for helpful discussions.

\bibliographystyle{siam}      
\bibliography{reference}{}   

\end{document}